\numberwithin{equation}{section}
\theoremstyle{plain}
\newtheorem{theorem}[equation]{Theorem}
\newtheorem*{theorem*}{Theorem}
\newtheorem{lemma}[equation]{Lemma}
\newtheorem*{lemma*}{Lemma}
\newtheorem{lemdef}[equation]{Lemma and definition}
\newtheorem{proposition}[equation]{Proposition}
\newtheorem*{proposition*}{Proposition}
\newtheorem{corollaries}[equation]{Corollaries}
\theoremstyle{definition}
\newtheorem{definition}[equation]{Definition}
\newtheorem{observation}[equation]{Observation}
\theoremstyle{remark}
\newtheorem{remark}[equation]{Remark}
\newtheorem{example}[equation]{Example}
\newcommand{\CatAct}{\mathcal{A}}
\newcommand{\CatEAct}{\mathcal{E}}
\newcommand{\CatCovSp}{\mathcal{COV}}
\newcommand*{\defeq}{\mathrel{\vcentcolon=}}
\newcommand{\N}{\mathbb N}
\newcommand{\R}{\mathbb R}
\newcommand*{\base}[1][G]{{#1}^{(0)}}
\newcommand{\nb}{\nobreakdash}
\newcommand{\inverse}{^{-1}}
\newcommand{\homeo}{\approx}
\newcommand{\iso}{\simeq}
\newcommand{\FGd}{\mathrm{\Pi}_1}
\newcommand{\FGp}{\mathrm{\pi}_1}
\newcommand{\UInt}{\mathbb{I}}
\newcommand{\Cov}[1][c]{\mathrm{c}}
\newcommand{\Lift}[1][\gamma]{\tilde #1}
\newcommand{\PathS}[1]{\mathrm{P}\!#1}
\newcommand{\etale}{{\'e}tale}
\newcommand*{\e}{\mathrm e}
\newcommand*{\Cst}{\textup C^*}
\title[Actions of fundamental groupoid]{Topological Fundamental
  Groupoid. II. An action category of the fundamental groupoid}
\author{Rohit Dilip Holkar} \email{rohit.d.holkar@gamil.com}
\author{Md Amir Hossain}
\email{mdamir18@iiserb.ac.in}
\author{Dheeraj Kulkarni}
\email{dheeraj@iiserb.ac.in}
\address{Department
	of Mathematics, Indian Institute of Science Education and Research
	Bhopal, Bhopal Bypass Road, Bhauri, Bhopal 462 066, Madhya Pradesh,
	India.}
      \keywords{Fundamental groupoid, topological groupoid, covering
        spaces, groupoid action, proper action} \thanks{\emph{Subject class.} 14H30,
        22A22, 57S, 37C85}
\begin{document}
\maketitle{}

\begin{abstract}
  For a path connected, locally path connected and semilocally simply
  connected space~\(X\), let~\(\Pi_1(X)\) denote its topologised fundamental
  groupoid as established in the first article of this
  series. Let~\(\mathcal{E}\) be the category
  of~\(\Pi_1(X)\)-spaces in which the momentum maps are local
  homeomorphisms. We show that this category is isomorphic to that of
  covering spaces of~\(X\). Using this, we give different
  characterisations for free or proper actions of the fundamental
  groupoid in~\(\mathcal{E}\). 
\end{abstract}

\tableofcontents{}

\section*{Introduction}
\label{sec:introduction}

In the earlier article~\cite{Holkar-Hossain2023Top-FGd-I}, we
topologise the fundamental groupoid of locally path connected and
semilocally simply connected spaces in a natural way. We discussed the
interrelationship between the topology of the underlying space and the
that of the fundamental groupoid in detail. In current article, we
turn our attention to the action category of the fundamental
groupoids.

The covering spaces carry a natural action of the fundamental
groupoid, namely, by evaluation at a lifted path (Proposition~\ref{prop:nature-of-act-of-FG}).  This has been a
standard observation,
e.g.\cite{Brown-Danesh-1975-Top-FG-1},~\cite{Brown-Danesh-Naruie-1976-Top-FG-2},\cite{Brown2006Topology-book}
and~\cite{Reinhart1983Folliations-book}. Relation of this action in
constructing the covering spaces has been a central attraction in
above literature. We take a different approach and wish to study these
actions as actions of a locally compact groupoid on spaces.

We are
also interested in finding out when these actions are free and
proper. Our interests are motivated by the intension of studying the
\(\Cst\)\nb-correspondences
(\cite{Muhly-Tomforde-2005-Topological-quivers},\cite{Holkar2017Construction-of-Corr},\cite{Tu2004NonHausdorff-gpd-proper-actions-and-K})
associated with \(\FGd(X)\)\nb-spaces. For constructing these
\(\Cst\)\nb-correspondences, we need to construct the Haar system
on~\(\FGd(X)\) and understand the free and proper
\(\FGd(X)\)\nb-spaces. We study the \(\FGd(X)\)\nb-action in current
and Haar systems in a followup article.

We notice that the covering spaces constitute an interesting category
of actions of a fundamental groupoid.
Theorem~\ref{thm:classify-cover-space} characterises the covering
spaces as
\(\FGd(X)\)\nb-spaces. Theorem~\ref{thm:iso-act-and-cov-categiries}
describe the category of covering spaces as a category of
certain~\(\FGd(X)\)\nb-spaces. We describe free
(Proposition~\ref{prop:condi-for-free-action}) and proper
(Theorem~\ref{thm:suff-cond-proper-action}) actions \emph{nicely} in
this category. While investigating the proper actions, we also
describe \emph{small} compact sets in~\(\FGd(X)\) (discussion
following Proposition~\ref{prop:kinetics-is-covering} and
Equation~\ref{eq:inverse-im-of-small-cpt}); this description could be
useful for practical or computation purposes.

\medskip

\paragraph{\emph{Organisation of the article:}} In the
Section~\ref{sec:prilim-TFGd-2}, we discuss preliminaries: topological
groupoids~\S~\ref{sec:topol-group} and their
actions~\S~\ref{sec:actions-groupoids}. In this section, we also
recall some required facts from preceding article of this series.

In Section~\ref{sec:acti-categ-fund}, we prove the first main result
Theorems~\ref{thm:classify-cover-space}
and~\ref{thm:iso-act-and-cov-categiries} which says that the category
of covering space over a space can be identified with the category of
\(\FGd(X)\)\nb-spaces in which the momentum map is a local
homeomorphism. We characterise the free actions
(Proposition~\ref{prop:condi-for-free-action}).

In the last section, first we prove that the map of \emph{kinetics} of
an action (Equation~\eqref{eq:kine}) is a local homeomorphism,
Proposition~\ref{prop:kinetics-is-covering}; this observation is next
used to characterise the proper actions
(Theorem~\ref{thm:suff-cond-proper-action}) of the fundamental
groupoid.

\section{Preliminaries}
\label{sec:prilim-TFGd-2}

\subsection{Topological groupoids}
\label{sec:topol-group}

In this second article, we continue to follow the conventions
established in the first article~\cite{Holkar-Hossain2023Top-FGd-I}.
Nonetheless, here is a quick recap of notation about groupoids: for
us, a \emph{groupoid}~\(G\) is a small category in which every arrow
is invertible. We abuse the notation and consider the set of
units~\(\base\) a subset of~\(G\). It is standard exercise that for an
element \(\gamma \in G\), \(\gamma \inverse \gamma = s(\gamma)\) and
\(\gamma \gamma \inverse = r(\gamma)\) are the range and source
of~\(\gamma\). The fibre product
\(G\times_{s, \base, r} G = \{(\gamma, \eta) \in G\times G : s(\gamma)
=r(\eta)\}\) is called the set of all composable pairs of \(G\) and
it's denoted by \(G^{(2)}\).

The groupoid~\(G\) is called \emph{topological} if it carries a
topology in which the source map~\(s\colon G\to \base\), range
map~\(r\colon G\to \base\), the inversion
map~\(\mathrm{inv}\colon G\to G\) and the
multiplication~\(m\colon G^{(2)} \to G\) are continuous; here the
space of units is given the subspace topology, and
\(G^{(2)}\subseteq G\times G\) carries the subspace topology.

The topological groupoid~\(G\) is called locally compact (or Hausdorff
or second countable) if the topology is locally compact (respectively,
Hausdorff or second countable) plus the space of units is
Hausdorff. For us locally compact spaces are not necessarily
Hausdorff, see~\cite{Holkar-Hossain2023Top-FGd-I}. However, unless the
reader is bothered about non-Hausdorff case, they may simply consider
locally compact as locally compact Hausdorff. Second countable or
paracompact spaces are assumed to be Hausdorff. We refer the reader to
Renault's book~\cite{Renault1980Gpd-Cst-Alg} and Tu's
article~\cite{Tu2004NonHausdorff-gpd-proper-actions-and-K} for basics
of locally compact groupoids and their actions.

Given units \(x, y\in \base\), we define the following closed subspace
of~\(G\):
\[
  G^x \defeq r^{-1}(x),\quad G_y \defeq s^{-1}(y)\quad \text{ and }
  \quad G^x_y \defeq G^x\cap G_y.
\]
In fact, \(G^x_x\) is a topological group called the \emph{isotropy
  group} at~\(x\). In general, for sets~\(A,B\subseteq \base\), we
define
\[
  G^A \defeq r^{-1}(A),\quad G_B \defeq s^{-1}(B)\quad \text{ and }
  \quad G^A_B \defeq G^A\cap G_B.
\]

\begin{observation}
  \label{obs:cardinality-of-iso-and-edges}
  Suppose that~\(\gamma\) is an arrow in a groupoid~\(G\); write
  \(x=s(\gamma)\) and \(y=r(\gamma)\). Then the cardinality of the
  isotropy at~\(x\) and \(G_x^y\) is same. This is because the
  function
  \[
    \phi\colon G_x^x\to G_x^y,\quad \eta\mapsto \gamma\eta \quad
    \text{for }\eta\in G_x^x
  \]
  is a bijection. The inverse of the function is given by
  \[
    \phi\inverse\colon G_x^y\to G_x^x,\quad \eta'\mapsto
    \gamma\inverse \eta' \quad \text{for }\eta'\in G_x^y.
  \]
\end{observation}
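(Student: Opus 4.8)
The plan is to verify by hand that the two maps $\phi$ and $\phi\inverse$ displayed in the statement are well defined and mutually inverse; the equality of cardinalities is then immediate. Nothing is needed beyond associativity and the unit laws in $G$, together with the identities $\gamma\inverse\gamma=s(\gamma)$ and $\gamma\gamma\inverse=r(\gamma)$ recalled above.

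First I would check that $\phi$ indeed takes values in $G_x^y$. Given $\eta\in G_x^x$ we have $r(\eta)=x=s(\gamma)$, so $(\gamma,\eta)$ is a composable pair and $\gamma\eta$ is defined; moreover $s(\gamma\eta)=s(\eta)=x$ and $r(\gamma\eta)=r(\gamma)=y$, whence $\gamma\eta\in G_x^y$. Symmetrically, for $\eta'\in G_x^y$ one has $s(\gamma\inverse)=r(\gamma)=y=r(\eta')$, so $\gamma\inverse\eta'$ is defined, with $s(\gamma\inverse\eta')=s(\eta')=x$ and $r(\gamma\inverse\eta')=r(\gamma\inverse)=s(\gamma)=x$; hence $\gamma\inverse\eta'\in G_x^x$. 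So both formulas genuinely map between the asserted sets.

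Next I would compute the two composites. For $\eta\in G_x^x$, associativity gives $\phi\inverse(\phi(\eta))=\gamma\inverse(\gamma\eta)=(\gamma\inverse\gamma)\eta=s(\gamma)\eta=x\eta=\eta$, the last equality because $x=r(\eta)$ acts as a left unit on $\eta$; likewise, for $\eta'\in G_x^y$ we get $\phi(\phi\inverse(\eta'))=(\gamma\gamma\inverse)\eta'=r(\gamma)\eta'=y\eta'=\eta'$ since $y=r(\eta')$. Hence $\phi$ is a bijection with inverse $\phi\inverse$, and so $G_x^x$ and $G_x^y$ have the same cardinality. The computation is entirely routine; the only point that needs a moment's attention, and the nearest thing to an obstacle, is the source/range bookkeeping ensuring that each product written down is actually composable, which is precisely what the identities above supply.
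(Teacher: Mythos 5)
Your proposal is correct and follows exactly the paper's own argument: the Observation itself exhibits \(\phi\colon\eta\mapsto\gamma\eta\) and \(\phi\inverse\colon\eta'\mapsto\gamma\inverse\eta'\) as mutually inverse maps, and you have merely spelled out the routine source/range bookkeeping and the associativity computations that the paper leaves implicit.
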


Next we discuss the fundamental groupoid. Consider a space~\(X\). For
a set~\(U\subseteq X\), we write~\(\PathS{U}\) for the set of all
paths in~\(U\). For a path~\(\gamma \in \PathS{X}\), \(\gamma(0)\) is
called the initial or starting point and~\(\gamma(1)\) the terminal or
end point of~\(\gamma\). For~\(\gamma\) as before, \(\gamma^-\)
denotes the path \emph{opposite} to~\(\gamma\). The concatenation of
paths is denoted by~\(\oblong\). The fundamental groupoid of~\(X\) is
denoted by~\(\FGd(X)\). The fundamental group of~\(X\) at~\(x\in X\)
is denoted by~\(\FGp(X,x)\). We shall use the fact that~\(\FGd(X)\) is
the quotient of~\(\PathS{X}\) by the equivalence relation of endpoint
fixing path homotopy.  If \(X\) path connected, we simply
write~\(\FGp(X)\) instead of~\(\FGp(X,x)\).  Finally, since we direct
the arrows in a groupoid from \emph{right to left}, we shall think
that a path starts from right and ends on left in oppose to the
standard convention.
      
      \begin{example}[The fundamental groupoid]\label{exa:fund-gpd}
        Let \(X\) be a locally path connected and semilocally simply
        connected space. Then, we prove in the preceding
        article~\cite{Holkar-Hossain2023Top-FGd-I}, that the
        fundamental groupoid~\(\FGd(X)\) of~\(X\) can be equipped with
        a topology so that it becomes a topological groupoid. Equipe
        the set of all paths, \(\PathS{X}\), in~\(X\) with the
        compact-open topology. Then the quotient topology
        on~\(\FGd(X)\) induced by the compact-open topology make the
        fundamental groupoid topological~\cite[Theorem
        2.8]{Holkar-Hossain2023Top-FGd-I}. We call this quotient
        topology the \emph{CO' topology}.

        There is another natural way to topologise~\(\FGd(X)\) as
        follows. For path connected and relatively inessential open
        sets~\(U,V\in X\) and a path~\(\gamma\) in~\(X\) starting at a
        point in~\(V\) and ending at a point in~\(U\), define for the
        following subset of~\(\FGd(X)\)
        \[
          N([\gamma], U,V) \defeq \{[\delta \oblong \gamma \oblong
          \omega] : \delta\in \PathS{U} \text{ with } \delta(0) =
          \gamma(1), \text{ and } \omega\in \PathS{V} \text{ with
          }\omega(1) = \gamma(0)\}.
        \]
        Then the sets of above form a basis for a topology
        on~\(\FGd(X)\) which we call \emph{the UC
          topology}. Proposition~2.4
        in~\cite{Holkar-Hossain2023Top-FGd-I} shows that for a locally
        path connected and semilocally simply connected space~\(X\),
        the UC and CO' topologies on the fundamental groupoid are
        same. The fundamental groupoid is not an \etale\ groupoid but
        a \emph{locally trivial} (\cite[Definition
        1.4]{Holkar-Hossain2023Top-FGd-I}) one.

        In the groupoid~\(\FGd(X)\), the range and source maps (which
        are basically the evaluations at~\(1\) and \(0\in [0,1]\),
        respectively) are open~\cite[Corollay
        2.7]{Holkar-Hossain2023Top-FGd-I}. The space of
        units~\(\base[\FGd(X)]\) consists of constant paths and can be
        identified with~\(X\)~\cite[Corollary
        2.9(1)]{Holkar-Hossain2023Top-FGd-I}. Assume that~\(X\) is
        also path connected. Then, for a unit~\(x\in X\), the
        fibres~\(\FGd(X)_x\) or~\(\FGd(X)^x\) can be identified with
        the simply connected covering space of~\(X\) (constructed
        using either the paths starting at~\(x\) or ending
        at~\(x\))~\cite[Corollary~2.9(2)]{Holkar-Hossain2023Top-FGd-I}. Moreover,
        the isotropy group \(\FGd(X)_x^x\) at~\(x\) is basically the
        fundamental group of~\(X\), and it is discrete
        ~\cite[Corollary~2.9(3)]{Holkar-Hossain2023Top-FGd-I}.

        The fundamental groupoid is Hausdorff (or locally compact or
        second countable) \emph{iff} the underlying spaces is
        so~\cite[Section 3]{Holkar-Hossain2023Top-FGd-I}.
      \end{example}

      \begin{example}[Fundamental groupoid of a group]
        \label{exa:fgd-of-gp}
        In earlier Example~\ref{exa:fund-gpd}, additionally assume
        that~\(X\) a topological group. Let~\(H\) be its covering
        group with the homomorphism~\(p\colon H\to X\) as the covering
        map. Then~\(H\) acts on~\(X\) through~\(p\). Theorem~2.21
        in~\cite{Holkar-Hossain2023Top-FGd-I} prove that the
        topological fundamental groupoid~\(\FGd(X)\) is isomorphic to
        the transformation groupoid~\(H \ltimes X\) of above action
        of~\(H\) on~\(X\).
      \end{example}

      \subsection{Actions of groupoids}
      \label{sec:actions-groupoids}

\begin{definition}
  Let \(G\) be a locally compact Hausdorff groupoid, and let \(X\) be
  a topological space with a continuous momentum map
  \(r_X\colon X \to \base\). We call \(X\) is a left \(G\)\nb-space
  (or \(G\) act on \(X\) from left ) if there is a continuous map
  \( \sigma \colon G \times_{s, \base, r_X} X \to X\) satisfying the
  following conditions:
  \begin{enumerate}
  \item \( \sigma (r_X(x), x) = x\) for all \(x\in X\);
  \item if \((\gamma, \eta) \in G^{(2)}\) and
    \((\eta, x) \in G \times_{s, \base, r_X} X \), then
    \((\gamma\eta, x), (\gamma, \sigma(\eta, x) ) \in G \times_{s,
      \base, r_X} X \) and
    \(\sigma ((\gamma \eta) , x) = \sigma (\gamma ,\sigma(\eta, x))\).
  \end{enumerate}
\end{definition}
      
We shall abuse the natation \(\sigma (\gamma, x)\) and simply write
\(\gamma \cdot x\) or \(\gamma x\). We shall often say `\(X\) is a
left (or right) \(G\)\nb-space' for a groupoid~\(G\); here it will be
\emph{tacitly assumed} that~\(r_X\) (respectively, left) is the
momentum map. The range (or source) map is the momentum map for the
left (respectively, right) multiplication action of a groupoid on
itself.

A groupoid~\(G\) acts on its space of units, from left, as follows:
for \(\gamma\in G\) and \(x\in G\), the action is defined if
\(s(\gamma) = x\) and is given by~\(\gamma x = r(\gamma)\). The
identity map on \(\base\) is the momentum map for this
action. Similarly a right action of~\(G\) on~\(\base\) is defined.

Given \(G\)\nb-spaces \(X\) and \(Y\), by an \emph{equivariant map} we
mean a function \(f\colon X\to Y\) such that \(r_Y\circ f = r_X\) and
\(f(\gamma x) = \gamma f(x)\) for all composable pairs
\((\gamma,x)\in G\times_{s, \base, r_X} X\).

\begin{example}[Transformation groupoid]
  \label{exa:transormation-gpd}
  For a continuous (right) action of a groupoid~\(G\) on a
  space~\(X\), one can construct the transformation groupoid which is
  denoted by~\(X\rtimes G\). The underlying space of the groupoid is
  the fibre product~\(X\times_{s_X, \base[G], r} G\); two elements
  \((x,g)\) and \((y,t)\) in~\(X\rtimes G\) are composable iff
  \(y= x\cdot g\), and the composition is given by
  \((x,g)(y,t) \defeq (x, gt)\); the inverse of~\((x,g)\) is given by
  \((x,g)^{-1} \defeq (x\cdot g, g^{-1})\). 
  For a left \(G\)\nb-space \(Y\), the transformation
  groupoid is defined similarly and is denoted by~\(G\ltimes Y\).
\end{example}

Next is a characterisation of spaces on which a transformation
groupoid can act.

\begin{lemma}[Lemma~2.7
  in~\cite{Emerson-Meyer2010Dualities-in-Equi-Kasparov-theory}]
  \label{lem:homo-of-trans-gpds}
  Let \(G\ltimes X\) be a transformation groupoid for an action of a
  groupoid~\(G\) on a space~\(X\). Then \(G\ltimes X\) acts on a space
  \(Y\) with \(\rho\colon Y\to X\) as momentum map \emph{iff} \(\rho\)
  is a \(G\)\nb-equivariant map of spaces. Thus there is a one-to-one
  correspondence between \(G\)\nb-equivariant maps
  \(\rho\colon Y\to X\) and \(G\ltimes X\)\nb-spaces~\(Y\).
\end{lemma}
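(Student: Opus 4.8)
The plan is to unwind the definitions and observe that a $G\ltimes X$\nb-action on $Y$ and a $G$\nb-action on $Y$ compatible with $\rho$ carry literally the same information; the dictionary between them is implemented by the single formula $\gamma\cdot y\longleftrightarrow(\gamma,\rho(y))\cdot y$. So first I would record the structure of $T\defeq G\ltimes X$: arguing as in Example~\ref{exa:transormation-gpd} but for a left action, the groupoid $T$ has unit space $X$ (embedded via $x\mapsto(r_X(x),x)$), source and range maps $s(\gamma,x)=x$ and $r(\gamma,x)=\gamma x$ (computed with the given action of $G$ on $X$, so that in particular $r_X(\gamma x)=r(\gamma)$), composition $(\gamma,\gamma'x')(\gamma',x')=(\gamma\gamma',x')$ and inverse $(\gamma,x)\inverse=(\gamma\inverse,\gamma x)$. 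Consequently a $T$\nb-space is a space $Y$ with a continuous momentum map $\rho\colon Y\to X$ and a continuous action defined on $\{((\gamma,x),y):x=\rho(y)\}$, for which the action axioms in particular force the identity $\rho\big((\gamma,\rho(y))\cdot y\big)=r(\gamma,\rho(y))=\gamma\,\rho(y)$; I will use this repeatedly.

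For the implication ``\(\Leftarrow\)'', suppose $Y$ is a $G$\nb-space with momentum map $r_Y$ and that $\rho\colon Y\to X$ is $G$\nb-equivariant, i.e.\ $r_X\circ\rho=r_Y$ and $\rho(\gamma y)=\gamma\,\rho(y)$ whenever $s(\gamma)=r_Y(y)$. I would set $(\gamma,\rho(y))\cdot y\defeq\gamma y$ and first check that this is defined exactly on the fibre product $\{((\gamma,x),y):x=\rho(y)\}$: indeed, $(\gamma,\rho(y))\in T$ together with $x=\rho(y)$ amounts to $s(\gamma)=r_X(\rho(y))=r_Y(y)$, which is precisely the condition under which $\gamma y$ is defined. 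Then I would verify the $T$\nb-action axioms by reducing each to the corresponding axiom for the $G$\nb-action on $Y$: the unit $(r_X(\rho(y)),\rho(y))$ of $T$ acts as $r_Y(y)\cdot y=y$; the composability conditions in the associativity axiom hold because the equivariance $\rho(\gamma y)=\gamma\rho(y)$ makes the relevant ranges match; and associativity itself becomes $(\gamma\eta)y=\gamma(\eta y)$. Continuity is inherited: $((\gamma,x),y)\mapsto(\gamma,y)$ is continuous from the fibre product into $G\times_{s,\base,r_Y}Y$ — it is a restriction of a projection, and it lands in this fibre product because $s(\gamma)=r_X(x)=r_Y(y)$ — and the $T$\nb-action is its composite with the continuous action of $G$ on $Y$.

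For ``\(\Rightarrow\)'', suppose $Y$ is a $T$\nb-space with momentum map $\rho$. I would put $r_Y\defeq r_X\circ\rho$ and $\gamma\cdot y\defeq(\gamma,\rho(y))\cdot y$, which is defined whenever $s(\gamma)=r_Y(y)$ (for then $(\gamma,\rho(y))\in T$ has source $\rho(y)$), and run the same dictionary in reverse to obtain a $G$\nb-action on $Y$ with momentum map $r_Y$. The key intermediate fact is again $\rho(\gamma y)=r(\gamma,\rho(y))=\gamma\rho(y)$; together with $r_X\circ\rho=r_Y$ this is exactly the statement that $\rho$ is $G$\nb-equivariant, and it is also what makes the composability conditions in the $G$\nb-associativity axiom typecheck. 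Continuity is inherited by the same factoring argument. Finally, the two passages are manifestly mutually inverse, both being implemented by $\gamma\cdot y\longleftrightarrow(\gamma,\rho(y))\cdot y$, which yields the asserted one-to-one correspondence.

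I do not expect a genuine obstacle here: the argument is purely formal and uses only continuity of the given data, not the standing Hausdorff or local-compactness hypotheses on $G$. If I had to single out the one point needing care, it is matching on the nose the fibre-product (``defined-ness'') conditions on the two sides under the left-action conventions, and recognising that $G$\nb-equivariance of $\rho$ is precisely what is needed for $\rho$ to respect ranges inside $T$.
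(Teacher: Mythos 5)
Your argument is correct: the paper itself offers no proof of this lemma (it is quoted from Lemma~2.7 of Emerson--Meyer), and your definition-unwinding via the dictionary \(\gamma\cdot y\leftrightarrow(\gamma,\rho(y))\cdot y\) is exactly the standard argument behind that citation, with the fibre-product/typechecking conditions and continuity handled correctly. One small point worth making explicit: the identity \(\rho((\gamma,x)\cdot y)=r(\gamma,x)\), which you say the action axioms ``force'', does follow from the paper's definition, but only by applying condition (2) with \(\gamma\) a unit arrow, since the compatibility of the momentum map with ranges is not listed as a separate axiom there.
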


Assume that \(X\) is a \(G\)\nb-space for a groupoid~\(G\). While
studying the groupoid actions, the following map
\begin{equation}\label{eq:kine}
  a\colon G\times_{s, \base[G], r_X} X \to X\times X, \quad a\colon
  (\gamma, x)  \mapsto (\gamma x, x)
\end{equation}
turns out useful. Although, this map does not have a standard name,
for the current article we call it \emph{the kinetics}\footnote{A
  better name is welcome!} or \emph{the map of the kinetics} of the
action. Observation~2.10 in~\cite{Holkar-Hossain2023Top-FGd-I} shows
that the map of kinetics for the action of~\(\FGd(X)\) on its space of
units is a local homeomorphism where~\(X\) is a locally path connected
and semilocally simply connected space.
      
\begin{lemdef}
  \label{lem:free-act}
  Let \(G\) be a groupoid acting on a space~\(X\).  Then the following
  statement are equivalent:
  \begin{enumerate}
  \item the map of kinetics of the action is one-to-one.
  \item For every~\(x\in X\), the stabiliser~\((G\ltimes X)^x_x\) is
    the trivial group.
  \end{enumerate}
  If any of the above condition holds, we call the \(G\)\nb-action
  on~\(X\) \emph{free}.
\end{lemdef}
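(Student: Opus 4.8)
The plan is to recognise the map of kinetics as the anchor map $(r,s)$ of the transformation groupoid $G\ltimes X$ and then to verify the elementary fact that such an anchor map is injective precisely when the groupoid has trivial isotropy everywhere. So first I would unwind the structure of $G\ltimes X$ from Example~\ref{exa:transormation-gpd}: its arrow space is $G\times_{s,\base[G],r_X}X$, its unit space is a copy of~$X$, the source and range of an arrow $(\gamma,x)$ are $x$ and $\gamma x$ respectively, composition is $(\gamma,\gamma'x)(\gamma',x)=(\gamma\gamma',x)$, and the inverse is $(\gamma,x)\inverse=(\gamma\inverse,\gamma x)$. Under these identifications the kinetics $a(\gamma,x)=(\gamma x,x)$ is literally the map $(r,s)\colon G\ltimes X\to X\times X$, and the stabiliser is $(G\ltimes X)^x_x=\{(\gamma,x):s(\gamma)=r_X(x),\ \gamma x=x\}$. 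I would also record that the action axioms force $r_X(\gamma x)=r(\gamma)$ (apply the second axiom with the unit $r(\eta)$ in place of the first arrow), i.e.\ that $r_X$ is equivariant; this is the only input that is not pure formalism.

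For (2)$\Rightarrow$(1): suppose every stabiliser is trivial and $a(\gamma,x)=a(\eta,y)$. Then $x=y$ and $\gamma x=\eta x$; applying $r_X$ gives $r(\gamma)=r(\eta)$, so $\eta\inverse\gamma$ is a composable product with $r(\eta\inverse\gamma)=s(\eta\inverse\gamma)=r_X(x)$, and $(\eta\inverse\gamma)x=\eta\inverse(\eta x)=r_X(x)\cdot x=x$ by the action axioms. Hence $(\eta\inverse\gamma,x)\in(G\ltimes X)^x_x=\{(r_X(x),x)\}$, so $\eta\inverse\gamma=r_X(x)$ and $\gamma=\eta$; thus $a$ is one-to-one. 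For (1)$\Rightarrow$(2): fix $x\in X$ and $(\gamma,x)\in(G\ltimes X)^x_x$, so $\gamma x=x$. Then $a(\gamma,x)=(x,x)=a(r_X(x),x)$, and injectivity of $a$ forces $(\gamma,x)=(r_X(x),x)$, i.e.\ $\gamma$ is the unit at $r_X(x)$; hence $(G\ltimes X)^x_x$ is the trivial group.

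This argument is essentially bookkeeping, so I do not anticipate a genuine obstacle. The only steps deserving care are checking that $\eta\inverse\gamma$ is a genuinely composable arrow — which is exactly where $r(\gamma)=r(\eta)$, and hence the equivariance of $r_X$, enters — and that $\eta\inverse\gamma$ does fix~$x$, which unwinds to the two action axioms.
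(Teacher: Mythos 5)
Your proof is correct: the paper itself gives no argument for this statement (it explicitly says ``being a standard fact, we leave the proof of above lemma to reader''), and your verification --- identifying the kinetics with the anchor map \((r,s)\) of the transformation groupoid \(G\ltimes X\) and checking both implications directly from the action axioms --- is exactly the standard argument the authors intend. In particular, the one genuinely non-formal point, namely deriving the equivariance \(r_X(\gamma x)=r(\gamma)\) by applying the second action axiom with the unit \(r(\gamma)\) in the first slot (which is what legitimises forming \(\eta\inverse\gamma\) and concluding it fixes \(x\)), is handled correctly.
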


\noindent Being a standard fact, we leave the proof of above lemma to
reader.

A map of space \(f\colon X\to Y\) is called proper if \(f\inverse(K)\)
is compact if~\(K\subseteq Y\) is a compact set.
      
      \begin{lemdef}
        Let~\(G\) be a locally compact Hausdorff groupoid acting on a
        locally compact Hausdorff space~\(X\). Then the following
        statements are equivalent:
	\begin{enumerate}
        \item the maps of the kinetics of the action is proper;
        \item for any pair of compact subsets \(T, S\) of~\(X\), the
          set \(\{\gamma \in G : \gamma T \cap S \neq \emptyset\}\) is
          a compact set of \(G\).
	\end{enumerate}
        The action of~\(G\) on~\(X\) is called \emph{proper} if any of
        the above condition holds. And then the transformation
        groupoid~\(G\ltimes X\) is called proper.
      \end{lemdef}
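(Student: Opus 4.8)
The plan is to prove the two implications directly, translating back and forth between the kinetics map~\(a\) of~\eqref{eq:kine} and the ``small-set'' condition in~(2). For compact \(T,S\subseteq X\) write \(P(T,S)\defeq\{\gamma\in G : \gamma T\cap S\neq\emptyset\}\); the whole argument rests on the identity
\[
  a^{-1}(S\times T)=\{(\gamma,x)\in G\times_{s,\base[G],r_X}X : x\in T,\ \gamma x\in S\}
\]
together with the observation that \(\gamma\in P(T,S)\) exactly when there is some \(x\in T\) with \(s(\gamma)=r_X(x)\) and \(\gamma x\in S\). Writing \(\mathrm{pr}_G\colon G\times_{s,\base[G],r_X}X\to G\) for the projection, this gives \(P(T,S)=\mathrm{pr}_G\bigl(a^{-1}(S\times T)\bigr)\) and the inclusion \(a^{-1}(S\times T)\subseteq\bigl(P(T,S)\times T\bigr)\cap\bigl(G\times_{s,\base[G],r_X}X\bigr)\).

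For (1) \(\Rightarrow\) (2): given compact \(T,S\), the product \(S\times T\) is compact in \(X\times X\), so \(a^{-1}(S\times T)\) is compact by~(1), and hence \(P(T,S)=\mathrm{pr}_G(a^{-1}(S\times T))\) is compact as a continuous image of a compact set. For (2) \(\Rightarrow\) (1): let \(K\subseteq X\times X\) be compact and put \(S\defeq\mathrm{pr}_1(K)\), \(T\defeq\mathrm{pr}_2(K)\), which are compact with \(K\subseteq S\times T\). Since \(X\times X\) is Hausdorff, \(K\) and \(S\times T\) are closed, so \(a^{-1}(K)\) is closed in \(a^{-1}(S\times T)\) and it is enough to prove \(a^{-1}(S\times T)\) compact. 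By the inclusion above it lies in \(\bigl(P(T,S)\times T\bigr)\cap\bigl(G\times_{s,\base[G],r_X}X\bigr)\); the first factor is compact by~(2), and since \(\base[G]\) is Hausdorff the fibre product \(G\times_{s,\base[G],r_X}X\) is closed in \(G\times X\) (being the preimage of the diagonal under \(s\times r_X\)), so this intersection is compact. As \(a^{-1}(S\times T)\) is moreover closed in \(G\times_{s,\base[G],r_X}X\) (the preimage of the closed set \(S\times T\) under the continuous map \(a\)) and the ambient spaces are Hausdorff, it is a closed subset of a compact set and therefore compact; hence so is \(a^{-1}(K)\).

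The content is elementary point-set topology, so I do not expect a genuine obstacle; the mild care required is purely Hausdorffness bookkeeping — closedness of \(K\), of \(S\times T\), and of the fibre product inside \(G\times X\) — all supplied by the standing hypothesis that \(G\) and \(X\), hence also \(\base[G]\), are Hausdorff. The one place where a slip is easy is the set-theoretic translation \(\gamma T\cap S\neq\emptyset \iff \exists\,x\in T\colon s(\gamma)=r_X(x)\text{ and }\gamma x\in S\): one must keep the constraint \(s(\gamma)=r_X(x)\) so that \(P(T,S)\) is exactly the \(\mathrm{pr}_G\)\nb-image of \(a^{-1}(S\times T)\) and not of some larger set. (The closing sentence of the statement, naming \(G\ltimes X\) proper, is a definition and needs no proof.)
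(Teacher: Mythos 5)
Your argument is correct, and it is essentially the standard one: the paper itself omits the proof, declaring it standard and citing Proposition~2.17 of Williams' book, and that reference proceeds by the same translation between properness of the kinetics map \(a\) and compactness of the sets \(\{\gamma\in G:\gamma T\cap S\neq\emptyset\}\), with the same Hausdorffness bookkeeping (closedness of the fibre product and of \(S\times T\), plus projecting/enclosing \(a^{-1}(S\times T)\) as you do). No gaps to report.
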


      \noindent The proof of last lemma is also standard,
      e.g. see~\cite[Proposition~2.17]{Williams2019A-Toolkit-Gpd-algebra}. A
      groupoid~\(G\) is called proper if its action on the space of
      units is proper.

      \begin{observation}
	\label{obs:isotropy-of-proper-action}
	If a groupoid \(G\) acts properly on a space~\(X\), then the
        isotropy at any point~\(x\in X\) is a compact; for it is
        inverse image of~\((x,x)\) under the map of kinetics of the
        action.
      \end{observation}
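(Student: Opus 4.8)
The plan is to exhibit the isotropy group at~\(x\) as a single fibre of the map of the kinetics and then invoke properness directly.

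First I would spell out what the isotropy group means here. Following the conventions of Example~\ref{exa:transormation-gpd}, the transformation groupoid \(G\ltimes X\) has underlying space \(G\times_{s,\base[G],r_X}X\), its unit space is identified with~\(X\) via~\(r_X\), and an arrow \((\gamma,x')\) has source~\(x'\) and range~\(\gamma x'\). Hence the isotropy group at~\(x\in X\) is
\[
  (G\ltimes X)^x_x=\{(\gamma,x')\in G\times_{s,\base[G],r_X}X : x'=x \text{ and }\gamma x=x\},
\]
and comparing with~\eqref{eq:kine} this is exactly \(a^{-1}(\{(x,x)\})\), the preimage under the map of the kinetics of the one-point set \(\{(x,x)\}\subseteq X\times X\).

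Second, the action being proper, the kinetics map~\(a\) is a proper map (this is one of the two equivalent conditions defining a proper action); since \(\{(x,x)\}\) is a compact subset of~\(X\times X\), its preimage \(a^{-1}(\{(x,x)\})\) is therefore compact. As this preimage is precisely the isotropy group at~\(x\), the claim follows. If one prefers to regard the isotropy as the subset \(\{\gamma\in G : s(\gamma)=r_X(x),\ \gamma x=x\}\) of~\(G\), it is homeomorphic to the above fibre via \((\gamma,x)\mapsto\gamma\), hence equally compact.

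There is no genuine obstacle: the only point requiring a little care is keeping straight the identification of the unit space of \(G\ltimes X\) with~\(X\), so that the fibre \(a^{-1}(\{(x,x)\})\) coincides with the isotropy group on the nose rather than merely up to the evident homeomorphism. Once the conventions of Example~\ref{exa:transormation-gpd} are in place this bookkeeping is immediate, and the observation is a one-line consequence of properness.
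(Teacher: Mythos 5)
Your argument is correct and is exactly the paper's own reasoning: the observation's justification is precisely that the isotropy at~\(x\) is the fibre \(a^{-1}(\{(x,x)\})\) of the kinetics map, which is compact because properness of the action means \(a\) is a proper map and the singleton \(\{(x,x)\}\) is compact. Your extra care about identifying \((G\ltimes X)^x_x\) with this fibre is a harmless elaboration of the same one-line proof.
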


      \section{And action category of a fundamental groupoid and free
        actions}
      \label{sec:acti-categ-fund}

      \begin{definition}
	\label{def:path-lift-prop}
	Let \(X\) and \(Y\) be spaces, and \(f\colon Y\to X\) a
        continuous surjection. We say that \(f\) has
	\begin{enumerate}
        \item the path lifting property (or \emph{the unique path
            lifting property}) if for any path
          \(\gamma\colon \UInt\to X\) and a point
          \(y\in f\inverse(\gamma(0))\), there is a path
          (respectively, a unique path) \(\tilde{f}\colon\UInt \to Y\)
          starting at \(y\) and \(f\circ \tilde{f} = \gamma\).
        \item \emph{the homotopy lifting property} (or \emph{the
            unique homotopy lifting property} ) if \(f\) has path
          lifting property (respectively, the unique path lifting
          property) and for given two paths
          \(\gamma, \alpha \colon \UInt\to X\) with
          \(\gamma(0)=\alpha(0)\) and \(\gamma(1)=\alpha(1)\); an
          endpoint fixing homotopy
          \(\Gamma\colon \UInt\times\UInt\to X\) of~\(\gamma\)
          with~\(\alpha\); and a point \(y\in f\inverse(\gamma(0))\),
          there is a function (respectively, a unique function)
          \(\tilde{\Gamma}\colon \UInt\times\UInt\to Y\) with the
          properties that \(\tilde{\Gamma}|_{\{0\}\times\UInt}\) is a
          lift (respectively, the unique lift) \(\tilde{\gamma}\)
          of~\(\gamma\) starting
          at~\(y\in Y\);~\(\tilde{\Gamma}|_{\{1\}\times\UInt}\) is a
          lift (respectively, the unique lift)~\(\tilde{\alpha}\) of
          \(\alpha\) starting at \(y\in Y\); and
          \(f \circ \tilde{\Gamma} = \Gamma\).
	\end{enumerate}
      \end{definition}

      It is a standard fact that covering maps have the unique path
      lifting and unique homotopy lifting properties. Proposition~3 of
      Chapter--5-6A in~\cite{DoCarmo1976Diff-Geo-Curves-Surface} says
      that a local homeomorphism having the unique path lifting
      property also has unique homotopy lifting property.

      \begin{remark}[Functoriality of the unique path lifting
        property]\label{rem:img-of-lift-of-path}
  Suppose \(Y_1\xrightarrow{p_1} X \xleftarrow{p_2} Y_2\) are two
  mappings which have unique path lifting properties. Assume that
  \(f\colon Y_1 \to Y_2\) is a continuous map such that
  \(p_1 = p_2 \circ f\). For a given path \(\gamma\) in~\(X\), choose
  \(y_1\in Y_1\) with \(p_1(y_1) = \gamma(0)\). Let \(y_2=
  f(y_1)\). If \(\tilde{\gamma}_{y_1}\) is the unique lift
  of~\(\gamma\) in~\(Y_1\) starting at~\(y_1\), then
  \(f\circ \tilde{\gamma}_{y_1}\) is the unique lift of~\(\gamma\)
  in~\(Y_2\) starting at~\(y_2\) as~\(p_1 = p_2 \circ f\).
\end{remark}

\subsection{Covering spaces as \(\FGd(X)\)-spaces}
\label{sec:act-of-FG}

Let~\(X\) be a locally path connected and semilocally simply connected
space, and \(\Cov\colon Y\to X\) a covering map. For a path
\(\gamma\in \PathS{X}\) and \(y\in \Cov\inverse(\gamma(0))\),
by~\(\tilde \gamma_{y}\) we shall denote the unique lift of~\(\gamma\)
starting at~\(y\). As~\(\Cov\) also has the unique homotopy lifting
property, each pair \(([\gamma], y)\) where \(\gamma\) is a path
in~\(X\) and \(y\in \Cov\inverse(\gamma(0))\), determines the unique
element~\([\tilde \gamma_{y}]\in\FGd(Y)\). Using this observation, we
define a (left) action of \(\FGd(X)\) on~\(Y\) as follows:
\begin{enumerate}[(i),leftmargin=*]
\item \(\Cov\) is the momentum map for the action;
\item For each pair~\(([\gamma], y)\) in the fibre
  product~\(\FGd(X)\times_{s, X, \Cov} Y\), the action
  \([\gamma]y \defeq \tilde\gamma_y(1)\).
\end{enumerate}

\noindent For the sake of clarity, the fibre
product~\(\FGd(X)\times_{s, X, \Cov} Y = \{([\gamma], y)\in
\FGd(X)\times Y : \gamma(0) = \Cov(y)\}\).

We shall refer this action of the fundamental groupoid~\(\FGd(X)\)
on~\(Y\) as \emph{the} (left) action of the groupoid on the covering
space. A right action can be defined similarly.

Notice that the last action can be defined, in general, for any
mapping having the unique path and homotopy lifting properties.
\medskip

For a covering map \(\Cov\colon Y\to X\), an evenly covered 
neighbourhood of a point has the standard meaning as in
Hatcher~\cite{Hatcher2002Alg-Top-Book}. Consider an evenly covered
open set \(U\subseteq X\); write
\(\Cov\inverse(U) = \sqcup_{\alpha} \tilde{U}_\alpha\) where each
\(\tilde{U}_\alpha\) is homeomorphic to~\(U\) via~\(\Cov\). We call
each~\(\tilde{U}_\alpha\) a slice over~\(U\).  Since evenly covered
neighbourhoods form a basis for the topology of~\(X\), the slices also
form a basis for the topology of~\(Y\).

\begin{proposition}
  \label{prop:nature-of-act-of-FG}
  Let \(X\) be a locally path connected and semilocally simply
  connected space and \(\Cov\colon Y\to X\) a covering map.
  \begin{enumerate}
  \item The action of \(\FGd(X)\) on \(Y\) is continuous.
  \item \label{cond:stab} The stabilizer of \(y\in Y\) is the subgroup
    \(\Cov_*(\FGp(Y,y))\iso \FGp(Y,y)\) of the fundamental
    group~\(\FGp(X,\Cov(y))\); here \(\Cov_*\) is the homomorphisms of
    fundamental group(oid)s that~\(\Cov\) induces.
  \item Assume \(Y\) is also path connected. Then the action
    of~\(\FGd(X)\) on~\(Y\) is free if and only if \(Y\) is simply
    connected.
  \end{enumerate}
\end{proposition}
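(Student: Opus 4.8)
The plan is to verify the three assertions in turn, leaning on the unique path and homotopy lifting properties of the covering map $\Cov$ and on the description of the UC/CO' topologies recalled in Example~\ref{exa:fund-gpd}.

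For~(1), continuity of the action $\sigma\colon \FGd(X)\times_{s,X,\Cov}Y\to Y$, I would argue locally using the slice basis for~$Y$ introduced just above. Fix a composable pair $([\gamma],y)$ with $z=\tilde\gamma_y(1)$, and choose an evenly covered open set $U\ni\Cov(z)$ together with the slice $\tilde U_\alpha\ni z$. On the source side, pick an evenly covered path-connected relatively inessential open set $V\ni\Cov(y)$ with slice $\tilde V_\beta\ni y$, and use the basic UC open set $N([\gamma],U,V)$ around $[\gamma]$. The point is that for $[\delta\oblong\gamma\oblong\omega]\in N([\gamma],U,V)$ and $y'\in \tilde V_\beta$, the lift of $\delta\oblong\gamma\oblong\omega$ starting at $y'$ ends inside $\tilde U_\alpha$: concatenation of lifts together with the fact that lifts of paths inside an evenly covered set stay in a single slice gives this. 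Concretely, $\sigma$ restricted to $N([\gamma],U,V)\times_{s,X,\Cov}\tilde V_\beta$ is the composite of $\Cov|_{\tilde U_\alpha}^{-1}$ with the evaluation-at-$1$ map and the continuous action of $\FGd(X)$ on its space of units (via the identification $\base[\FGd(X)]\approx X$); since all these are continuous, so is $\sigma$. The main technical care here is checking that the endpoint of the lift depends continuously on the homotopy class of the path, not just on a representative path — but this is exactly what the unique homotopy lifting property of $\Cov$ delivers, so the endpoint map is well defined on $\FGd(X)$, and continuity is then the local computation just sketched.

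For~(2), I would compute the stabiliser of $y$ directly. An element $[\gamma]\in\FGd(X)^{\Cov(y)}_{\Cov(y)}=\FGp(X,\Cov(y))$ fixes $y$ iff $\tilde\gamma_y(1)=y$, i.e. iff the lift $\tilde\gamma_y$ is a loop at~$y$ in~$Y$; by the unique homotopy lifting property, the homotopy class $[\tilde\gamma_y]$ is well defined and lies in $\FGp(Y,y)$, and $\Cov_*[\tilde\gamma_y]=[\gamma]$. Conversely any loop in~$Y$ at~$y$ projects to such a $[\gamma]$. This identifies the stabiliser with the image $\Cov_*(\FGp(Y,y))$, and since $\Cov_*$ is injective on $\FGp(Y,y)$ (standard for covering maps, again a consequence of unique path lifting: a null-homotopy downstairs lifts to a null-homotopy upstairs), we get $\Cov_*(\FGp(Y,y))\iso\FGp(Y,y)$. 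I would note the stabiliser here is literally $(\FGd(X)\ltimes Y)^y_y$, matching the language of Lemma and definition~\ref{lem:free-act}.

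For~(3), assume $Y$ path connected. If $Y$ is simply connected then $\FGp(Y,y)$ is trivial for every~$y$, so by~(2) every stabiliser is trivial, and by Lemma and definition~\ref{lem:free-act} the action is free. Conversely, if the action is free then $\FGp(Y,y)\iso\Cov_*(\FGp(Y,y))=\{e\}$ for some (hence, since $Y$ is path connected, every) $y$, so $Y$ is simply connected. I do not anticipate a genuine obstacle in part~(3); the only real work is part~(1), and within it the one point deserving attention is the interplay between the UC basic open sets $N([\gamma],U,V)$ and the slice decomposition — making sure the slices chosen on the source and target sides are compatible with a single representative path $\gamma$ and its lift.
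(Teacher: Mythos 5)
Your proposal is correct and follows essentially the same route as the paper: continuity of the action is proved by showing that a basic UC neighbourhood \(N([\gamma],\cdot,\cdot)\) paired with a slice on the source side is mapped into a chosen slice around \(\tilde\gamma_y(1)\), and (2)--(3) identify the stabiliser with \(\Cov_*(\FGp(Y,y))\) and deduce freeness iff \(Y\) is simply connected. The only cosmetic differences are that you prove the stabiliser description directly (the paper cites Hatcher, Proposition~1.31) and you phrase the local continuity as a composition with \((\Cov|_{\tilde U_\alpha})^{-1}\) and the range map rather than exhibiting \(\sigma(Q)\subseteq W\) explicitly.
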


\begin{proof}
  \noindent (1): The momentum map \(\Cov\) is continuous. So we only
  need to show that the map
  \[
    \sigma \colon \FGd(X)\times_{s,X,\Cov} Y \to Y, \quad \sigma
    \colon ([\gamma],y)\mapsto [\gamma]y \defeq \tilde{\gamma_y}(1)
  \]
  is continuous. Let \(W\subseteq Y\) be a given open set. For given a
  point \(([\gamma], y)\in \sigma \inverse(W)\), we construct a basic
  open neighbourhood~\(Q\) of~\(([\gamma], y)\) such
  that~\(Q \subseteq \sigma \inverse(W)\) to prove the
  continuity~\(\sigma\).
	
  Let~\(y'\) denote \(\tilde{\gamma}_y(1)\). Thus,
  \(\tilde{\gamma}_y\) starts at~\(y\) and ends at~\(y'\in W\). Choose
  path connected relatively inessential slices~\(V\) and \(U\) over
  some evenly covered neighbourhoods of~\(\Cov(y')\) and~\(\Cov(y)\),
  respectively. Additionally, as slices form a basis for the topology
  of~\(Y\), we can choose~\(V\subseteq W\).  Then
  \[
    Q \defeq \bigl(N([\gamma],\Cov(V), \Cov(U)) \times U \bigr) \cap
    \bigl(\FGd(X)\times_{s,X,\Cov} Y\bigr)
  \]
  is a nonempty basic open neighbourhood of~\(([\gamma], y)\)
  in~\(\FGd(X)\times_{s,X,\Cov} Y\), and
  clearly~\(\sigma (Q) = V\subseteq W\).
	
  \noindent (2): Here we basically want to describe the homotopy
  classes of paths in~\(X\) starting at~\(\Cov(y)\) which lift to
  homotopy classes of loops at~\(y\). It is a standard result,
  \cite[Proposition~1.31]{Hatcher2002Alg-Top-Book} , that such
  homotopy classes of paths starting at~\(\Cov(y)\) exactly the
  subgroup~\(\Cov_* (\pi_1(Y,y))\subseteq \FGp(X,\Cov(y))\).
	
  \noindent (3): The action is free \emph{iff} stabiliser at each
  point of~\(Y\) is trivial. Due to~(2) above, this means the action
  is free \emph{iff} \(\Cov_*(\FGp(Y,y))\) is the trivial subgroup
  of~\(\FGp(X,\Cov(y))\) for each~\(y\in Y\). This happen \emph{iff}
  \(Y\) is the universal covering space.  .
\end{proof}

\begin{remark}
  \label{rem:iso-for-FGp-action}
  In Proposition~\ref{prop:nature-of-act-of-FG},
  let~\(\alpha\colon \FGd(X)\times_{s,X, \Cov} Y \to Y\) denote the
  action of~\(\FGd(X)\) on~\(Y\). For \(x\in X\), let \(\alpha_x\) be
  the restriction of this action to~\(\FGp(X,x)\subseteq
  \FGd(X)\). Then note that the proof of~(2) in the proposition also
  implies that the isotropy of~\(\alpha\) and~\(\alpha_x\) are same.
\end{remark}

Last Proposition~\ref{prop:nature-of-act-of-FG} describes covering
spaces as \(\FGd(X)\)\nb-spaces (in which the covering maps are
serving as the momentum maps for the actions). But not every
\(\FGd(X)\)\nb-space can be a covering space as we can easily
construct~\(\FGd(X)\)\nb-spaces in which the momentum maps are not
local homeomorphisms, see
Example~\ref{exmp:justifi-local-homeo}. However, adding the extra
hypothesis that the momentum map of action is {\etale}\footnote{By an
  {\etale} map we mean a local homeomorphism.}, produces the converse
of Proposition~\ref{prop:nature-of-act-of-FG} which is our next main
result Theorem~\ref{thm:classify-cover-space}. In this theorem we also
give other characterisations of~\(\FGd(X)\)\nb-spaces.  Next we
discuss some lemmas required to prove
Theorem~\ref{thm:classify-cover-space}.

\begin{lemma}
  \label{lem:UPL-HL:covering-map}
  Let \(X\) be a locally path connected and semilocally simply
  connected space. Let \(p \colon Y \to X\) be an open surjection;
  assume that~\(p\) has the unique homotopy lifting property. Then
  \(p\) is a covering map.
\end{lemma}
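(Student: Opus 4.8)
The plan is to verify the definition of a covering map directly: I show that every point of~$X$ has an evenly covered neighbourhood. Since $X$ is locally path connected and semilocally simply connected, each $x\in X$ has a path connected, \emph{relatively inessential} open neighbourhood~$U$ (every loop in~$U$ is null-homotopic in~$X$), and I prove that any such~$U$ is evenly covered. As $p$ has the unique homotopy lifting property it has the unique path lifting property, so each path~$\gamma$ in~$X$ and point $y\in p\inverse(\gamma(0))$ determine a unique lift~$\tilde{\gamma}_y$.

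The first step is the observation underlying the $\FGd(X)$-action on~$Y$: if $\gamma,\gamma'$ are paths in~$X$ with the same endpoints and $\gamma\simeq\gamma'$ rel endpoints, then $\tilde{\gamma}_y(1)=\tilde{\gamma'}_y(1)$. One sees this by lifting the endpoint-fixing homotopy via the unique homotopy lifting property to a continuous $\tilde{\Gamma}\colon\UInt\times\UInt\to Y$ and noting that its restriction to $\UInt\times\{1\}$ lifts a constant path, hence is constant by unique path lifting. Because $U$ is relatively inessential, any two paths in~$U$ with the same endpoints are homotopic rel endpoints in~$X$; therefore, for each $y\in p\inverse(U)$, the rule $\sigma_y(u)\defeq\tilde{\gamma}_y(1)$, for $\gamma$ any path in~$U$ from $p(y)$ to~$u$, is a well-defined section of~$p$ over~$U$ with $\sigma_y(p(y))=y$. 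Writing $S_y\defeq\sigma_y(U)$, the following are then routine consequences of unique path lifting (a lift of a concatenation or reversal of paths is the concatenation or reversal of the lifts): $y\in S_y$; $z\in S_y$ forces $S_z=S_y$; hence the sets $S_y$ partition $p\inverse(U)$; and each $p|_{S_y}\colon S_y\to U$ is a bijection — surjective by path connectedness of~$U$, injective by the first step together with relative inessentiality.

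What remains, and what I expect to be the real obstacle, is to prove that each slice~$S_y$ is \emph{open} in~$Y$; granting this, $p|_{S_y}$ is a continuous open bijection, hence a homeomorphism, so $\{S_y\}$ exhibits~$U$ as evenly covered and~$p$ is a covering map. I would obtain openness of the slices from the sharper statement that $p$ is a local homeomorphism. First note that any path lying in a single fibre $p\inverse(u)$ lifts the constant path at~$u$, hence is constant by unique path lifting, so the fibres of~$p$ are totally path disconnected; using local path connectedness of the total space this forces the fibres to be discrete and provides, around each $z_0\in Y$, a path connected open set~$V$ with $p(V)$ contained in a relatively inessential path connected open~$U$. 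Then for $z\in V$ a path $\delta$ in~$V$ from $z_0$ to~$z$ is the unique lift of $p\circ\delta$ at~$z_0$, whence $z=\sigma_{z_0}(p(z))\in S_{z_0}$; thus $V\subseteq S_{z_0}$, and running this at every point of a slice shows each slice is open. The combinatorial bookkeeping in the middle step is routine; the genuine content is this openness / local-homeomorphism assertion, where the openness of~$p$, the unique homotopy lifting property, and the local path structure of the spaces all have to be used together.
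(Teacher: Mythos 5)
Up to the openness of the slices, your argument is the paper's own: you build the same slices (endpoints of unique lifts of paths in a path connected, relatively inessential \(U\)), use the unique homotopy lifting property together with relative inessentiality to make \(\sigma_y\) well defined, and the bookkeeping giving the partition of \(p\inverse(U)\) and the bijectivity of \(p|_{S_y}\) is exactly the paper's; all of that is correct. The gap is in the step you yourself flag as the crux. Your openness argument needs, around each \(z_0\in p\inverse(U)\), a \emph{path connected} open \(V\subseteq p\inverse(U)\), and you obtain it from ``local path connectedness of the total space''; but the lemma assumes nothing about \(Y\) beyond \(p\) being an open surjection with the unique homotopy lifting property, so that hypothesis is not available to you. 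A smaller point: the auxiliary claim that totally path disconnected fibres inside a locally path connected space must be discrete is false (consider \(\{0\}\cup\{1/n : n\in\N\}\) in \(\R\)); fortunately you never need discreteness of the fibres, only the path connected \(V\), so this is a stray remark rather than a load-bearing step.

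For context: your instinct that slice-openness is the genuine content is right, and some input beyond the literally stated hypotheses really is needed there. If \(X\) is a one-point space and \(Y=\mathbb{Q}\), then \(p\colon\mathbb{Q}\to X\) is an open surjection with the unique path and homotopy lifting properties (every path and every homotopy in \(\mathbb{Q}\) is constant), the slices are singletons, and \(p\) is not a covering map; so openness of the slices cannot be extracted from continuity and the lifting properties alone. The paper's proof treats this step differently: it picks, by continuity, an open \(V\) around a slice point with \(p(V)\subseteq U\) and asserts that \(V\) lies inside the slice via unique path lifting; note that this assertion also requires some way of joining points of \(V\) to the slice's base point (for instance inside \(V\)), i.e.\ it leans on the same kind of local path structure of \(Y\) that you invoke explicitly. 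In the one place the lemma is actually used --- the implication (2)\(\implies\)(3) of Theorem~\ref{thm:classify-cover-space}, where \(p\) is a local homeomorphism --- the space \(Y\) is locally path connected, and then your \(V\subseteq S_{z_0}\) argument closes and your proof is complete. So your proposal proves the lemma under that additional (and, in the application, available) hypothesis on \(Y\), but not from the hypotheses as literally stated; either state the extra assumption or find a genuine use of the openness of \(p\) at this point.
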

\begin{proof}
  Let \(x\in X\), and let \(U\) be a path connected and relatively
  inessential neighbourhood of \(x\). Let \(\PathS{U}_x\) be the set
  of all path in~\(U\) starting at~\(x\). Now, for given
  \(\tilde{x} \in p^{-1}(x)\), define the set
  \[
    \tilde{U}_{\tilde{x}} \defeq \{\Lift_{\Lift[x]}(1) : \gamma \in
    \PathS{U}_x \}.
  \]
  Since~\(p\) has the unique path lifting property, a standard
  argument shows that for two preimages \(\tilde{x}\neq \tilde{y}\)
  of~\(x\),
  \(\tilde{U}_{\tilde{x}}\cap \tilde{U}_{\tilde{y}}=\emptyset\).

  Next we show that
  \[
    p^{-1} (U) = \bigsqcup_{\Lift[x] \in p^{-1}(x)}
    \Lift[U]_{\Lift[x]}.
  \]
  By definition of \(\tilde{U}_{\tilde{x}}\), it is clear that
  \(\tilde{U}_{\tilde{x}}\subseteq p\inverse(U)\) for each
  \(\tilde{x}\in p\inverse(x)\). Therefore,
  \(\bigsqcup_{\Lift[x] \in p^{-1}(x)} \Lift[U]_{\Lift[x]} \subseteq
  p\inverse(U)\).
	
  For converse, suppose \(y\in p\inverse (U)\).  Let \(\gamma\) be a
  path in~\(U\) connecting~\(x\) to~\(p(y)\); let \(\gamma^-\) be the
  path obtained by traversing~\(\gamma\) in the opposite
  direction. Let \(\tilde{\gamma}^-_y\) be the unique lift
  of~\(\gamma^-\) starting at~\(y\).
  Put~\(\tilde{x} = \tilde{\gamma}^-_y(1)\). Then
  \(\tilde{x}\in p\inverse(x)\) and \(y\in \tilde{U}_{\tilde{x}}\).
	
  We now show that for any \(\Lift[x]\in p^{-1}(x)\),
  \(p|_{\Lift[U]_{\Lift[x]}}\colon \Lift[U]_{\Lift[x]} \to U\) is a
  homeomorphism. Firstly note that the restricted map
  \(p|_{\Lift[U]_{\Lift[x]}}\) is surjective as~\(U\) is path
  connected. The map is injective because~\(U\) is relatively
  inessential and~\(p\) has the unique homotopy lifting
  property. Finally, we prove that~\(p|_{\Lift[U]_{\Lift[x]}}\) is
  open. As \(p\) was an open map, to prove
  that~\(p|_{\Lift[U]_{\Lift[x]}}\) is open it is sufficient to show
  \(\Lift[U]_{\Lift[x]}\) is an open set. This can be proved as
  follows: let \(\Lift_{\Lift[x]}(1) \in \Lift[U]_{\Lift[x]}\) be any
  point. Using the continuity of~\(p\), choose an open set
  \(V\subseteq Y\) containing \(\Lift_{\Lift[x]}(1)\) and with
  \(p(V) \subseteq U\). Then \(V\)~is, in fact, contained
  in~\(\tilde{U}_{\tilde{x}}\). To see this, let \(v\in V\), and
  choose a path \(\xi \in \PathS{U}\) from \(x\) to \(p(v)\). By the
  uniqueness of the path lifting property we have
  \( v = \Lift[\xi]_{\Lift[x]} (1) \in \Lift[U]_{\Lift[x]}\).
\end{proof}

\begin{lemma}
  \label{lem:uniqness-path-lift}
  Let \(p\colon Y \to X\) be a local homeomorphism having the path
  lifting property. Then
  \begin{enumerate}
  \item \(p\) has unique path lifting property;
  \item \(p\) has unique homotopy lifting property.
  \end{enumerate}
\end{lemma}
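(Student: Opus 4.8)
The plan is to note first that part~(2) is an immediate consequence of part~(1): by the result recalled above (a local homeomorphism with the unique path lifting property automatically has the unique homotopy lifting property), once~(1) is known, (2) follows. So all the content lies in proving~(1).

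For~(1): the hypothesis that \(p\) has the path lifting property supplies the \emph{existence} of lifts, so it remains only to establish \emph{uniqueness}. Let \(\gamma\colon\UInt\to X\) be a path and let \(\tilde\gamma_1,\tilde\gamma_2\colon\UInt\to Y\) be two lifts of \(\gamma\) with \(\tilde\gamma_1(0)=\tilde\gamma_2(0)\); I must show \(\tilde\gamma_1=\tilde\gamma_2\). I would consider the agreement set
\[
  A\defeq\{t\in\UInt : \tilde\gamma_1(t)=\tilde\gamma_2(t)\},
\]
which contains~\(0\), and prove that it is both open and closed in the connected space~\(\UInt\); then \(A=\UInt\), which is the claim.

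Closedness is formal: \(A\) is the preimage of the diagonal \(\Delta_Y\subseteq Y\times Y\) under the continuous map \(t\mapsto(\tilde\gamma_1(t),\tilde\gamma_2(t))\), and \(\Delta_Y\) is closed because \(Y\) is Hausdorff. For openness, fix \(t_0\in A\) and set \(z=\tilde\gamma_1(t_0)=\tilde\gamma_2(t_0)\). Since \(p\) is a local homeomorphism, pick an open set \(V\ni z\) on which \(p\) restricts to a homeomorphism onto the open set \(p(V)\). By continuity of the two lifts there is a relatively open interval \(J\ni t_0\) in \(\UInt\) with \(\tilde\gamma_1(J)\cup\tilde\gamma_2(J)\subseteq V\); since \(p\circ\tilde\gamma_1=\gamma=p\circ\tilde\gamma_2\) on \(J\) and \(p|_V\) is injective, we get \(\tilde\gamma_1=\tilde\gamma_2\) on \(J\), so \(J\subseteq A\).

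I do not expect a serious obstacle: this is a routine open--closed connectedness argument, and the only step that must not be overlooked is the use of the Hausdorffness of \(Y\) in the closedness of \(\Delta_Y\). (The statement really does need this: for a non-Hausdorff local homeomorphism such as the line with a doubled origin over \(\R\), path lifts exist but need not be unique.)
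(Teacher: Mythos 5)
Your argument is correct and is essentially the paper's own proof of part~(1): the paper takes the set \(A=\{s\in\UInt:\tilde\gamma(t)=\eta(t)\text{ for all }t\le s\}\), shows it is nonempty and closed using continuity of the lifts and Hausdorffness of \(Y\), and then rules out \(\sup A<1\) by local injectivity of \(p\) near the common value --- which is exactly your open--closed connectedness argument in supremum form, with the same two ingredients (Hausdorffness for closedness, the local homeomorphism for local agreement). Part~(2) is handled identically in the paper, by citing the do Carmo proposition, so you may stop there; your remark that Hausdorffness of \(Y\) is genuinely needed (line with doubled origin) is a correct and worthwhile observation consistent with the paper's standing conventions.
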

\begin{proof}
  (1): Let \(\gamma \colon \UInt \to X \) be a path with two lifts
  \(\tilde{\gamma}\) and \(\eta\) starting at
  \(y\in p^{-1}(\gamma(0))\).  Let
  \(A = \{s\in \UInt : \tilde{\gamma}(t) = \eta(t) \textup{ for all }
  t \leq s \}\). Then~\(A\) is a nonempty closed set of the unit
  interval: \(A\neq \emptyset\) for \(0\in A\); and the closedness
  of~\(A\) follows from the continuity of the maps~\(\tilde{\gamma}\)
  and~\(\eta\) and Hausdorffness of~\(Y\).
	
  Now our claim is that \(\sup(A) \defeq s_0 =1\). On the contrary,
  suppose that \(s_0 < 1\). Since~\(A\subseteq \UInt\) is closed,
  \(s_0\in A\), that is, \(\tilde{\gamma}(s_0) = \eta(s_0) =
  y_0\). Choose a neighbourhood~\(U\) of~\(y_0\) such that \(p|_U\) is
  homeomorphism onto its image and \(p(U)\subseteq X\) is open. Note
  that \(\gamma = p\circ \tilde{\gamma} = p\circ \eta\). The
  continuity of~\(\gamma\) at~\(s_0\) gives us \(\epsilon>0\) such
  that \(\gamma((s_0-\epsilon, s_0+\epsilon)) \subseteq
  p(U)\). As~\(p|_U\colon U\to p(U)\) is homeomorphism,
  \(\tilde{\gamma}(s_0+\epsilon/2) = \eta(s_0+\epsilon/2)\) which
  contradicts that \(s_0= \sup(A)\).
	
  \noindent (2): Last proved claim of~(1) and the fact that~\(p\) is a
  local homeomorphism satisfy hypothesis of~Proposition~3 of
  Chapter--5-6A~\cite{DoCarmo1976Diff-Geo-Curves-Surface}; this
  proposition immediately implies the desired result.
\end{proof}

\begin{lemma}
  \label{lem:moment-UPLP-HLP}
  Let \(Y\) be a left \(\FGd(X)\)\nb-space, where \(X\) is locally
  path connected and semilocally simply connected. Suppose the
  momentum map \(r_Y\colon Y \to X\) is a local homeomorphism.  Then
  the momentum map \(r_Y\) has unique path lifting property and unique
  homotopy lifting property.
\end{lemma}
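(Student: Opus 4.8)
The plan is to reduce the statement to Lemma~\ref{lem:uniqness-path-lift}. That lemma says that a local homeomorphism having the path lifting property automatically has the unique path lifting property and the unique homotopy lifting property; since \(r_Y\) is assumed to be a local homeomorphism, all that is left to do is to produce, for every path \(\gamma\colon\UInt\to X\) and every \(y\in r_Y\inverse(\gamma(0))\), a lift of~\(\gamma\) starting at~\(y\). (If one wants \(r_Y\) to be onto so as to match Definition~\ref{def:path-lift-prop} literally, this follows for nonempty~\(Y\) once path lifting is available, because \(r_Y\) is open and its image is then a union of path components of~\(X\), hence all of~\(X\) when \(X\) is connected.)

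First I would build the candidate lift directly out of the action. For \(t\in\UInt\) let \(\Gamma(t)\in\PathS{X}\) be the initial segment \(s\mapsto\gamma(ts)\) of~\(\gamma\), reparametrised on~\(\UInt\). Since \((t,s)\mapsto\gamma(ts)\) is continuous and \(\UInt\) is locally compact Hausdorff, the exponential law makes \(\Gamma\colon\UInt\to\PathS{X}\) continuous for the compact-open topology; composing with the (continuous) quotient map \(\PathS{X}\to\FGd(X)\) of Example~\ref{exa:fund-gpd} gives a continuous curve \(\beta(t)\defeq[\Gamma(t)]\) in~\(\FGd(X)\) with \(s(\beta(t))=\gamma(0)=r_Y(y)\) for all~\(t\). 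Hence \(t\mapsto(\beta(t),y)\) is a continuous map into \(\FGd(X)\times_{s,X,r_Y}Y\), and I would set \(\tilde\gamma(t)\defeq\sigma(\beta(t),y)=\beta(t)\cdot y\), which is continuous because the action map~\(\sigma\) is. Then \(\tilde\gamma(0)=r_Y(y)\cdot y=y\) — the constant path \(\Gamma(0)\) is the unit of~\(\FGd(X)\) at \(\gamma(0)\), so the first action axiom applies — while \(r_Y(\tilde\gamma(t))=r(\beta(t))=\Gamma(t)(1)=\gamma(t)\), i.e.\ \(r_Y\circ\tilde\gamma=\gamma\). Thus \(\tilde\gamma\) is a genuine lift of~\(\gamma\) through~\(y\), \(r_Y\) has the path lifting property, and Lemma~\ref{lem:uniqness-path-lift} finishes the argument.

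I expect the only real point to be the continuity of~\(\tilde\gamma\): once one knows that \(t\mapsto[\gamma|_{[0,t]}]\) is a continuous path in~\(\FGd(X)\) — which is precisely where the quotient description of the CO' topology and the exponential law are used — everything else is an immediate consequence of the groupoid action axioms. All the hypotheses on~\(X\) (locally path connected, semilocally simply connected) enter only through Example~\ref{exa:fund-gpd}, that is, to guarantee that \(\FGd(X)\) is a topological groupoid whose topology is the quotient of the compact-open topology on~\(\PathS{X}\).
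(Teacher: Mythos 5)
Your proposal is correct and follows essentially the same route as the paper: the lift is defined by \(\tilde\gamma(t)=[\gamma|_{[0,t]}]\cdot y\), shown to be a continuous lift of~\(\gamma\) through~\(y\) using the continuity of the action, and then Lemma~\ref{lem:uniqness-path-lift} upgrades path lifting to unique path and homotopy lifting. Your only addition is to spell out, via the exponential law and the quotient description of the CO' topology, why \(t\mapsto[\gamma|_{[0,t]}]\) is continuous, a point the paper leaves implicit.
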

\begin{proof}
  Given a path~\(\gamma\) in~\(X\) and \(y\in r_Y\inverse(\gamma(0))\)
  define the path~\(\tilde{\gamma}\) in~\(Y\) starting at~\(y\) as
  follows:
  \begin{equation}\label{equ:uniq-lift}
    \Lift(t)= [\gamma|_{[0,t]}]y \quad \text{ for } 0\leq t\leq 1.
  \end{equation}
  The continuity of \(\Lift\) is follows from the continuity of the
  action. Furthermore,
  \[
    r_Y\circ \Lift (t) = r_Y([\gamma|_{[0,t]}]y) =
    r([\gamma|_{[0,t]}]) = \gamma(t)
  \]
  where~\(r\) is the range map of~\({\FGd(X)}\).  Thus~\(\Lift\) is a
  lift of~\(\gamma\) at~\(y\) in~\(Y\). This shows that~\(r_Y\) has
  path lifting property.  Now Lemma~\ref{lem:uniqness-path-lift}
  implies the required claim.
\end{proof}

\begin{theorem}
  \label{thm:classify-cover-space}
  Let \(X\) be a locally path connected and semilocally simply
  connected space.  Suppose \(p\colon Y\to X\) is a (surjective) local
  homeomorphism. Then the following statements are equivalent:
  \begin{enumerate}
  \item \(Y\) is a \(\FGd(X)\)\nb-space;
  \item \(p\) has unique path lifting property;
  \item \(p\) is a covering map;
  \item \(p\) has unique homotopy lifting property.
  \end{enumerate}
	
\end{theorem}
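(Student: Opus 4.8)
The plan is to prove the four conditions equivalent by establishing the cycle $(1)\Rightarrow(4)\Rightarrow(3)\Rightarrow(1)$ together with the separate equivalence $(2)\Leftrightarrow(4)$. Throughout, in statement~(1) I read the hypothesis ``\(Y\) is a \(\FGd(X)\)\nb-space'' as meaning that \(p\) itself is the momentum map of the action: the unit space of \(\FGd(X)\) is \(X\), so any momentum map already lands in \(X\), and there is no loss in requiring it to be \(p\). With that convention the three preparatory lemmas and Proposition~\ref{prop:nature-of-act-of-FG} do essentially all the work, so the argument is mostly bookkeeping; the only point needing care is tracking which implications use that \(p\) is a local homeomorphism and which only use that \(p\) is an open surjection.

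For the cycle: $(1)\Rightarrow(4)$ is exactly Lemma~\ref{lem:moment-UPLP-HLP} applied to the momentum map \(r_Y=p\), which is a local homeomorphism by hypothesis; it delivers both the unique path lifting and the unique homotopy lifting properties at once. For $(4)\Rightarrow(3)$, I would use that a local homeomorphism is an open map, so \(p\) is an open surjection with the unique homotopy lifting property, and Lemma~\ref{lem:UPL-HL:covering-map} then shows \(p\) is a covering map. For $(3)\Rightarrow(1)$, Proposition~\ref{prop:nature-of-act-of-FG}(1) attaches to the covering map \(p\) a continuous left action of \(\FGd(X)\) on \(Y\) having \(p\) as momentum map, so \(Y\) is a \(\FGd(X)\)\nb-space, closing the cycle.

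Finally $(4)\Leftrightarrow(2)$: one direction is immediate from Definition~\ref{def:path-lift-prop}, since by definition the unique homotopy lifting property already includes the unique path lifting property; for $(2)\Rightarrow(4)$, the unique path lifting property implies the plain path lifting property, and \(p\) is a local homeomorphism, so Lemma~\ref{lem:uniqness-path-lift}(2) applies and yields the unique homotopy lifting property. I do not expect a genuine obstacle, as the substance has been isolated in Lemmas~\ref{lem:UPL-HL:covering-map}, \ref{lem:uniqness-path-lift} and~\ref{lem:moment-UPLP-HLP}. If anything, the delicate point is the convention flagged above: being explicit that the \(\FGd(X)\)\nb-space structure in~(1) is taken with respect to \(p\), since an arbitrary \(\FGd(X)\)\nb-space need not have a local homeomorphism as its momentum map (Example~\ref{exmp:justifi-local-homeo}).
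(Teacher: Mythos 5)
Your proof is correct and follows essentially the same route as the paper: the same ingredients (Lemmas~\ref{lem:moment-UPLP-HLP}, \ref{lem:uniqness-path-lift}, \ref{lem:UPL-HL:covering-map} and Proposition~\ref{prop:nature-of-act-of-FG}(1)) carry the argument, with only a trivial reshuffling of the implication graph --- the paper proves (2)\(\Rightarrow\)(3) by first upgrading to (4) and then invoking Lemma~\ref{lem:UPL-HL:covering-map}, whereas you go (4)\(\Rightarrow\)(3) directly and handle (2)\(\Leftrightarrow\)(4) separately. Your explicit convention that the \(\FGd(X)\)\nb-action in (1) has \(p\) as its momentum map is exactly the paper's tacit reading, so there is no gap.
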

\begin{proof}
  \noindent (1) \(\implies\) \noindent (2) or (4): Follows from
  Lemma~\ref{lem:moment-UPLP-HLP}.
	
  \noindent (2) \(\implies\) \noindent (3): Since~\(p\) is a local
  homeomorphism with unique path lifting property,
  Lemma~\ref{lem:moment-UPLP-HLP}(2) says that~\(p\) has unique
  homotopy lifting property. Now Lemma~\ref{lem:UPL-HL:covering-map}
  shows that~\(p\) is a covering map.
	
  \noindent (3) \(\implies\) \noindent (1): Follows from the first
  part of Proposition~\ref{prop:nature-of-act-of-FG}.
	
  \noindent Finally, (4)\(\implies\)(2) is obvious.
	
\end{proof}

Next examples describes a ~\(\FGd(X)\)\nb-space in which the momentum
map is not a local homeomorphism.
\begin{example}
  \label{exmp:justifi-local-homeo}
  Consider the map \(p\colon \R \times \R \to S^1\) by
  \(p(x,y) = \e^{2\pi ix}\); this map is not a local homeomorphism.
  Equipe~\(\R \times \R\) with the translation (in both variables)
  action of~\(\R\); equipe the unit circle~\(S^1\) with the
  next~\(\R\)\nb-action: \(t\cdot \e^{2\pi ix} = \e^{2\pi i(t+x)}\)
  where \(t, \xi\in \R\). Then \(p\) is an~\(\R\)\nb-equivariant map.
  Now Lemma~\ref{lem:homo-of-trans-gpds} implies that~\(\R\times \R\)
  carries an action of \(\R\ltimes S^1\) with~\(p\) as the momentum
  map.  We identify \(\FGd(S^1) \cong \R \ltimes S^1\) using
  Example~\ref{exa:fgd-of-gp}. Thus \(\R \times \R\) is a
  \(\FGd(S^1)\)\nb-space, but \(p\) is not a covering map.
\end{example}

This point on, we shall restrict our study to the category of
\(\FGd(X)\)\nb-spaces having the momentum map a local
homeomorphism. Our next quests are to characterise free---and, then,
proper---\(\FGd(X)\)\nb-actions on such spaces. The next result gives
us a necessary and sufficient condition for freeness of such actions.

\begin{proposition}
  \label{prop:condi-for-free-action}
  Suppose \(X\) is a locally path connected and semilocally simply
  connected space and~\(Y\) a path connected
  \(\FGd(X)\)\nb-space. Suppose the momentum map \(r_Y\colon Y\to X\)
  is a local homeomorphism. Then the action of \(\FGd(X)\) on \(Y\) is
  free iff \(Y\) is simply connected.
\end{proposition}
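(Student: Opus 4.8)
The plan is to derive this as a corollary of Theorem~\ref{thm:classify-cover-space} together with part~(3) of Proposition~\ref{prop:nature-of-act-of-FG}. First I would dispose of the surjectivity of the momentum map, which Theorem~\ref{thm:classify-cover-space} wants: if $Y=\emptyset$ the statement is vacuous, and otherwise pick $y_0\in Y$ and any $x\in X$; since $X$ is path connected there is a path $\gamma$ from $r_Y(y_0)$ to $x$, and by Lemma~\ref{lem:moment-UPLP-HLP} the momentum map $r_Y$ has the path lifting property, so the endpoint of a lift of $\gamma$ starting at $y_0$ is a preimage of $x$. Thus $r_Y\colon Y\to X$ is a surjective local homeomorphism on which $\FGd(X)$ acts, and Theorem~\ref{thm:classify-cover-space} shows that $r_Y$ is a covering map.

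Next I would check that the given $\FGd(X)$-action on $Y$ is precisely the canonical action of $\FGd(X)$ on the covering space $r_Y$ introduced before Proposition~\ref{prop:nature-of-act-of-FG}. This is already contained in the proof of Lemma~\ref{lem:moment-UPLP-HLP}: for a path $\gamma$ in $X$ and $y\in r_Y^{-1}(\gamma(0))$, the curve $t\mapsto[\gamma|_{[0,t]}]y$ is a lift of $\gamma$ starting at $y$, hence by the unique path lifting property it equals $\tilde{\gamma}_y$, so $[\gamma]y=\tilde{\gamma}_y(1)$ --- which is exactly the defining formula of the canonical action. (One could record this identification as a short remark.)

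With these two observations the proposition is immediate from Proposition~\ref{prop:nature-of-act-of-FG}(3): since $Y$ is path connected, the action of $\FGd(X)$ on $Y$ is free if and only if $Y$ is simply connected. Alternatively, for a self-contained account, by Lemma and Definition~\ref{lem:free-act} the action is free iff every stabiliser is trivial, and by Proposition~\ref{prop:nature-of-act-of-FG}(2) (valid once $r_Y$ is known to be a covering map) the stabiliser of $y$ is $(r_Y)_*(\FGp(Y,y))\cong\FGp(Y,y)$; as $Y$ is path connected all these groups are isomorphic, so they vanish simultaneously exactly when $Y$ is simply connected. The only point requiring care is the middle step identifying the abstract action with the geometric one, and even that is read off from Lemma~\ref{lem:moment-UPLP-HLP}; so I do not anticipate a genuine obstacle, the argument being an assembly of results already established.
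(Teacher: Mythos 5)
Your proposal is correct and follows essentially the same route as the paper: invoke Theorem~\ref{thm:classify-cover-space} to see that \(r_Y\) is a covering map, then combine Lemma~\ref{lem:free-act} with Proposition~\ref{prop:nature-of-act-of-FG}(2)--(3) to identify freeness with simple connectedness of \(Y\). Your extra care in checking surjectivity of \(r_Y\) and in verifying (via the lift formula of Lemma~\ref{lem:moment-UPLP-HLP}) that the given action coincides with the canonical action on the covering space only makes explicit details the paper leaves implicit.
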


\begin{proof}
  Recall from Theorem~\ref{thm:classify-cover-space} that \(r_Y\) is a
  covering map. Now the given action is free \emph{iff} the stabiliser
  of any given point \(y\in Y\) is trivial. Recall from
  Proposition~\ref{prop:nature-of-act-of-FG}(\ref{cond:stab}), that
  the stabiliser of~\(y\) is the
  subgroup~\({r_Y}_* (\pi_1(Y,y))\subseteq \FGd(X)\). This subgroup is
  trivial \emph{iff} \(r_Y\colon Y \to X\) is the universal covering
  space.
\end{proof}

Theorem~\ref{thm:classify-cover-space} suggests that covering space
theory may be rephrased in terms of~\(\FGd(X)\)\nb-spaces. For this,
consider a path connected, locally path connected and semilocally
simply connected space~\(X\). Let \(\CatAct_{\FGd(X)}\) denote the
category of \(\FGd(X)\)\nb-space---the objects of this category are
\(\FGd(X)\)\nb-spaces, and \(\FGd(X)\)\nb-equivariant maps are arrows
between objects. Consider the subcategory~\(\CatEAct_{\FGd(X)}\)
of~\(\CatAct_{\FGd(X)}\) consisting of path connected
\(\FGd(X)\)\nb-spaces whose momentum maps are local homeomorphisms. On
the other hand, let \(\CatCovSp_X\) denote the category of covering
\emph{maps}\footnote{We assume that the corresponding covering spaces
  are path connected.} of~\(X\) that Spanier defines~\cite[Chapter 2,
\S5]{Spanier1966Alg-Top-Book}---the objects in this category are
covering maps and arrows are the continuous maps of covering spaces
which preserve that covering maps. Then
Theorem~\ref{thm:classify-cover-space} establishes an isomorphism of
categories~\(\CatCovSp_X \iso \CatEAct_{\FGd(X)}\): (1) and (2) of
this theorem clearly establish the isomorphism of objects. To show
that arrows are also well behaved, firstly, take two covering spaces
\(Y_1\xrightarrow{p_1} X \xleftarrow{p_2} Y_2\) and consider a
morphism \(f\colon Y_1 \to Y_2\). Then as a consequence of
Remark~\ref{rem:img-of-lift-of-path},~\(f\) is
\(\FGd(X)\)\nb-equivariant map. Conversely, given a
\(\FGd(X)\)\nb-equivariant map \(g\colon Y_1 \to Y_2\), by definition
of equivariant map \(r_{Y_2} \circ g = r_{Y_1}\). That means~\(g\) is
a morphism of covering space \(g\colon (Y_1,p_1) \to (Y_2,p_2)\).  We
summarise this discussion as the next theorem:
\begin{theorem}
  \label{thm:iso-act-and-cov-categiries}
  Let \(X\) be a path connected, locally path connected and
  semilocally simply connected space. Then the
  categories~\(\CatEAct_{\FGd(X)}\) and~\(\CatCovSp_X\) are
  isomorphic.
\end{theorem}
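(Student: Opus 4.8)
The plan is to build a pair of mutually inverse functors
\[
  F\colon \CatCovSp_X \longrightarrow \CatEAct_{\FGd(X)}
  \qquad\text{and}\qquad
  G\colon \CatEAct_{\FGd(X)} \longrightarrow \CatCovSp_X,
\]
both of which leave the underlying continuous map of a morphism unchanged. Once this is arranged, functoriality and the two identities \(G\circ F=\mathrm{id}\), \(F\circ G=\mathrm{id}\) become essentially formal, and all the substance is already supplied by Theorem~\ref{thm:classify-cover-space}, Proposition~\ref{prop:nature-of-act-of-FG} and Remark~\ref{rem:img-of-lift-of-path}.

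First I would define \(F\). On objects it sends a covering map \(p\colon Y\to X\) (with \(Y\) path connected) to the \(\FGd(X)\)\nb-space \(Y\) carrying the action of Proposition~\ref{prop:nature-of-act-of-FG} with momentum map~\(p\); this object lies in \(\CatEAct_{\FGd(X)}\) because that action is continuous (Proposition~\ref{prop:nature-of-act-of-FG}(1)), because a covering map is a local homeomorphism, and because \(Y\) is path connected by hypothesis. On a morphism \(f\colon (Y_1,p_1)\to (Y_2,p_2)\) of covering spaces, i.e.\ a continuous \(f\) with \(p_2\circ f=p_1\), I set \(F(f)=f\): the relation \(p_2\circ f=p_1\) is precisely \(r_{Y_2}\circ f=r_{Y_1}\), and Remark~\ref{rem:img-of-lift-of-path} shows that \(f\) carries the unique lift of a path \(\gamma\) in \(X\) starting at \(y\in Y_1\) to the unique lift starting at \(f(y)\), so that evaluating at \(1\) gives \(f([\gamma]y)=[\gamma]f(y)\); hence \(f\) is \(\FGd(X)\)\nb-equivariant. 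Since \(F\) does not touch the underlying maps it preserves identities and composites, so \(F\) is a functor.

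Next I would define \(G\) and then compare the composites. An object of \(\CatEAct_{\FGd(X)}\) is a path connected \(\FGd(X)\)\nb-space \(Y\) whose momentum map \(r_Y\) is a local homeomorphism; since \(X\) is path connected, \(r_Y\) is surjective (given \(y\in Y\) and \(x'\in X\), join \(r_Y(y)\) to \(x'\) by a path \(\gamma\) and observe \(r_Y([\gamma]y)=x'\)), so Theorem~\ref{thm:classify-cover-space}, through the equivalence of~(1) and~(3), shows that \(r_Y\colon Y\to X\) is a covering map; I let \(G(Y)\) be this covering map. A \(\FGd(X)\)\nb-equivariant map \(g\colon Y_1\to Y_2\) satisfies \(r_{Y_2}\circ g=r_{Y_1}\) by the definition of an equivariant map, hence is a morphism of covering spaces, and I put \(G(g)=g\); as before \(G\) is a functor. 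On morphisms \(G\circ F\) and \(F\circ G\) are identities since neither functor changes the underlying map. On objects, \(G\circ F\) takes a covering map \(p\) to the covering map \(r_Y=p\), so \(G\circ F=\mathrm{id}\); while \(F\circ G\) re-equips a given \(\FGd(X)\)\nb-space \(Y\) with the action \([\gamma]\cdot y=\tilde{\gamma}_y(1)\) obtained from unique path lifting along \(r_Y\), and by Lemma~\ref{lem:moment-UPLP-HLP}, in particular by the explicit formula~\eqref{equ:uniq-lift} in its proof, this \(r_Y\)\nb-lift of \(\gamma\) starting at \(y\) is \(t\mapsto [\gamma|_{[0,t]}]y\), whose value at \(t=1\) is the original action \([\gamma]y\). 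Hence the reconstructed action is the original one, \(F\circ G=\mathrm{id}\), and the two categories are isomorphic. The only step that is not purely formal is this last identification of the two \(\FGd(X)\)\nb-actions on a given \(Y\), and I expect it to be the main (though slight) obstacle; formula~\eqref{equ:uniq-lift} settles it immediately.
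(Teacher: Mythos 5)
Your proposal is correct and follows essentially the same route as the paper: Theorem~\ref{thm:classify-cover-space} gives the correspondence on objects, while Remark~\ref{rem:img-of-lift-of-path} and the definition of an equivariant map give the correspondence on morphisms in the two directions. Your additional checks --- that \(r_Y\) is surjective (so Theorem~\ref{thm:classify-cover-space} applies) and that the round trip \(F\circ G\) reproduces the original action via formula~\eqref{equ:uniq-lift} --- merely make explicit what the paper treats as clear, so the two arguments coincide in substance.
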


Furthermore, Proposition~\ref{prop:condi-for-free-action} identifies
the \emph{universal} covering space with a free \(\FGd(X)\)\nb-space
in~\(\CatEAct_{\FGd(X)}\). Therefore, up to equivariant homeomorphism,
there is a \emph{unique path connected free \(\FGd(X)\)\nb-space}
having the momentum map a local homeomorphism. In fact, other
\(\FGd(X)\)\nb-spaces are quotients this free space; using this
observation, proposing a universal property for the free
\(\FGd(X)\)\nb-space which makes~\(\FGd(X)\) the \emph{universal
  \(\FGd(X)\)\nb-space}, should a good exercise.

Note that other \(\FGd(X)\) spaces are quotients of the universal
covering spaces. Thus the universal covering space \emph{seems} the
initial object of~\(\mathcal{E}_{\FGd(X)}\), unlike the classifying
space of proper \(G\)\nb-actions in~\cite[Definition
1.6]{Baum-Connes-Higson1993Classyfying-spaces-for-prop-act-and-K-theory}
which is the \emph{terminal} object in appropriate sense.

Rephrasing the standard results about covering spaces using the
identification~\(\CatEAct_{\FGd(X)}\iso \CatCovSp_X\) can be an
interesting exercise. Next are two examples of it:

\begin{proposition}[Consequence of Lemma~80.2 in
  Munkres~\cite{Munkress1975Topology-book} and Theorem~\ref{thm:iso-act-and-cov-categiries}]
  Let \(Y\) and \(Z\) be \(\FGd(X)\)\nb-spaces and \(\omega: Y \to Z\)
  a map of spaces. Next two statements are equivalent:
  \begin{enumerate}
  \item \(\omega\) is a \(\FGd(X)\)\nb-equivariant map.
  \item \(\omega \circ r_Z = r_Y\).
  \end{enumerate}
  Moreover, if any one of above holds, then following hold:
  \begin{enumerate}[resume]
  \item \(\omega\) is a covering map.
  \item \(Y\) is a \(\FGd(Z)\)\nb-space with \(\omega\) as the
    momentum map (and the action is given by evaluation of lifted path
    homotopies at~\(1\)).
  \end{enumerate}
\end{proposition}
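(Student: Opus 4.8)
The plan is to deduce everything from the category isomorphism $\CatEAct_{\FGd(X)}\iso \CatCovSp_X$ of Theorem~\ref{thm:iso-act-and-cov-categiries} together with the classical Lemma~80.2 of Munkres, which says: if $p_1\colon Y\to X$ and $p_2\colon Z\to X$ are covering maps and $\omega\colon Y\to Z$ is a map with $p_2\circ\omega = p_1$, then $\omega$ is itself a covering map; moreover such an $\omega$ exists (and is unique once a basepoint is fixed) precisely under the subgroup-inclusion condition on fundamental groups. First I would observe that since $Y$ and $Z$ are objects of $\CatEAct_{\FGd(X)}$, the momentum maps $r_Y\colon Y\to X$ and $r_Z\colon Z\to X$ are covering maps by Theorem~\ref{thm:classify-cover-space}. (I note in passing that the displayed statement has the composites written as $\omega\circ r_Z = r_Y$, which as types only makes sense read as $r_Z\circ\omega = r_Y$; I would silently use the sensible reading, matching the definition of equivariant map.)

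For the equivalence of (1) and (2): the implication (1)$\Rightarrow$(2) is immediate from the definition of $\FGd(X)$\nb-equivariant map, which requires $r_Z\circ\omega = r_Y$ as one of its two clauses. For (2)$\Rightarrow$(1), this is exactly the content of the discussion preceding Theorem~\ref{thm:iso-act-and-cov-categiries}: given $\omega$ with $r_Z\circ\omega = r_Y$, Remark~\ref{rem:img-of-lift-of-path} (functoriality of unique path lifting, applied with $p_1 = r_Y$, $p_2 = r_Z$, $f = \omega$) shows that $\omega$ carries the unique lift of a path $\gamma$ starting at $y$ to the unique lift starting at $\omega(y)$; evaluating at $1$ gives $\omega([\gamma]y) = [\gamma]\omega(y)$, i.e.\ $\omega$ is equivariant. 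So (1)$\Leftrightarrow$(2) is essentially a restatement of what has already been proved.

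For the "moreover" clauses, assume (2) (equivalently (1)) holds. Claim (3), that $\omega$ is a covering map, is precisely Munkres Lemma~80.2 applied to the two covering maps $r_Y$ and $r_Z$ over $X$ and the map $\omega$ with $r_Z\circ\omega = r_Y$; one only needs that $X$ is locally path connected (part of our standing hypotheses) so that the lemma's hypotheses are met and $Z$ is path connected. For claim (4), once we know $\omega\colon Y\to Z$ is a covering map, $Y$ becomes a $\FGd(Z)$\nb-space with $\omega$ as momentum map directly by Proposition~\ref{prop:nature-of-act-of-FG}(1) (the construction of the $\FGd$\nb-action on a covering space), and the action is by evaluation of lifted path\nb-homotopies at $1$ by the very definition of that action in~\S\ref{sec:act-of-FG}; here I would note $Z$ inherits local path connectedness and semilocal simple connectedness from $X$ via the covering $r_Z$, so $\FGd(Z)$ is topologised as in Example~\ref{exa:fund-gpd}.

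I do not expect a genuine obstacle here: the proposition is a dictionary translation, and each piece is cited. The only points requiring a word of care are (a) correcting/interpreting the direction of the composite in statement (2), and (b) checking that $Z$ satisfies the blanket hypotheses (locally path connected, semilocally simply connected) needed to speak of $\FGd(Z)$ and to invoke Munkres' lemma — both follow because $r_Z$ is a covering map and these properties are local and lift along local homeomorphisms.
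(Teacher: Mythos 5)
Your proposal is correct and follows exactly the route the paper intends: the paper offers no written proof beyond citing Munkres' Lemma~80.2 together with Theorem~\ref{thm:iso-act-and-cov-categiries}, and your argument is precisely the faithful expansion of that citation (equivariance $\Leftrightarrow$ compatibility of momentum maps via Remark~\ref{rem:img-of-lift-of-path}, Munkres for (3), and Proposition~\ref{prop:nature-of-act-of-FG} for (4)). Your reading of the composite in (2) as \(r_Z\circ\omega = r_Y\) is the correct interpretation of what is evidently a typo in the statement.
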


\begin{proposition}[Theorem~80.1 in
  Munkres~\cite{Munkress1975Topology-book} stated using
  Theorem~\ref{thm:iso-act-and-cov-categiries}]
  Let \(Y\) be a \(\FGd(X)\)\nb-space and \(y_0\in Y\). Let
  \(H\subseteq \FGd(X)\) be the stabliser at~\(y_0\). Then the group
  of \(\FGd(X)\)\nb-equivariant homeomorphisms of~\(Y\) is isomorphic
  to~\(N(H)/H\) where \(N(H)\) is the normaliser of~\(H\)
  in~\(\FGp(X,p(y_0))\).
\end{proposition}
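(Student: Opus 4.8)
\subsection*{Proof proposal}

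The plan is to deduce the statement from Munkres' Theorem~80.1 through the category isomorphism of Theorem~\ref{thm:iso-act-and-cov-categiries}. First I would record the standing setup: as $Y$ lies in $\CatEAct_{\FGd(X)}$, it is path connected and the momentum map $p\defeq r_Y\colon Y\to X$ is a local homeomorphism, so Theorem~\ref{thm:classify-cover-space} shows $p$ is a covering map; moreover $X$ is locally path connected by hypothesis, hence so is $Y$ (being a covering space of $X$). Thus the hypotheses of Munkres' theorem are met, with $E=Y$, $B=X$ and base points $y_0$ and $b_0\defeq p(y_0)$.

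Next I would match the two groups appearing in the statement with their classical counterparts. Recall from the proof of Theorem~\ref{thm:classify-cover-space} (via Lemma~\ref{lem:moment-UPLP-HLP}) that the $\FGd(X)$\nb-action on $Y$ is forced to be the canonical covering-space action $[\gamma]y=\tilde\gamma_y(1)$ for $p$. A $\FGd(X)$\nb-equivariant homeomorphism $f\colon Y\to Y$ then satisfies $p\circ f=r_Y\circ f=r_Y=p$; conversely, any homeomorphism $f\colon Y\to Y$ with $p\circ f=p$ is automatically $\FGd(X)$\nb-equivariant, since for a path $\gamma$ in $X$ the path $t\mapsto f(\tilde\gamma_y(t))$ is a lift of $\gamma$ starting at $f(y)$, hence equals $\tilde\gamma_{f(y)}$ by unique path lifting, so $f([\gamma]y)=[\gamma]f(y)$. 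Therefore the group of $\FGd(X)$\nb-equivariant homeomorphisms of $Y$ coincides with the group $G(Y)$ of covering transformations of $p\colon Y\to X$. On the other hand, Proposition~\ref{prop:nature-of-act-of-FG}(\ref{cond:stab}) identifies the stabiliser $H$ of $y_0$ with the subgroup $p_*\bigl(\FGp(Y,y_0)\bigr)$ of $\FGp(X,b_0)$ (and Remark~\ref{rem:iso-for-FGp-action} records that this isotropy indeed sits inside $\FGp(X,b_0)$).

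With these identifications in place, Munkres' Theorem~80.1 states exactly that $G(Y)\cong N(H)/H$, where $N(H)$ is the normaliser of $H=p_*(\FGp(Y,y_0))$ in $\FGp(X,b_0)$; this is the assertion.

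I expect the only point that needs genuine care to be the identification of $\FGd(X)$\nb-equivariant homeomorphisms of $Y$ with covering transformations of $(Y,p)$, i.e.\ checking that the arrow-part of the isomorphism $\CatEAct_{\FGd(X)}\iso\CatCovSp_X$ restricts correctly to self-isomorphisms of $(Y,p)$. This is essentially already contained in the discussion preceding Theorem~\ref{thm:iso-act-and-cov-categiries}, which uses Remark~\ref{rem:img-of-lift-of-path} to match morphisms of covering spaces with $\FGd(X)$\nb-equivariant maps; specialising to the invertible self-maps completes the reduction, and everything else is a direct citation of Munkres' theorem.
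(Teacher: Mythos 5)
Your proposal is correct and follows exactly the route the paper intends: the paper offers no separate proof beyond citing Munkres' Theorem~80.1 together with the identification \(\CatEAct_{\FGd(X)}\iso\CatCovSp_X\) and the remark that the stabiliser is \(H={r_Y}_*(\FGp(Y,y_0))\subseteq\FGp(X,r_Y(y_0))\) via Proposition~\ref{prop:nature-of-act-of-FG}(2). Your spelling out of the match between \(\FGd(X)\)\nb-equivariant self-homeomorphisms and deck transformations (via unique path lifting) is precisely the detail left implicit in the discussion preceding Theorem~\ref{thm:iso-act-and-cov-categiries}, so no gap remains.
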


\noindent The last proposition uses,
Proposition~\ref{prop:nature-of-act-of-FG}(2), namely, the
isotropy~\( H = {r_Y}_*(\FGp(Y, y_0))\subseteq \FGp(X, r_Y(y_0))\).

\section{Proper actions}
\label{sec:proper-actions}

\subsection{The kinetics of action}
\label{sec:kinetics-action}

Let \(A\xrightarrow{f} X \xleftarrow{g} B\) be maps of spaces. For
\(P\subseteq A\) and \(Q\subseteq B\), we denote the subset
\((P\times Q) \cap (A\times_{f,X,g}B)\) of the fibre
product~\( A\times_{f,X,g}B\) by~\(P\times_{f,X,q} Q\). The
set~\(P\times_{f,X,q} Q\) can be empty.

Before moving onto the proper actions of \(\FGd(X)\), we discuss a
technical property of the kinetics of \(\FGd(X)\)\nb-action, namely,
Lemma~\ref{lem:action-map-local-homeo}(3) and
Proposition~\ref{prop:kinetics-is-covering}.  Fix a
path connected covering space~\(p\colon Y\to X\), equivalently, a path
connected \(\FGd(X)\)\nb-space in which the momentum map is a local
homeomorphism. Recall from Equation~\eqref{eq:kine} that the map of
kinetics of the action is given by
\[
  a\colon \FGd(X) \times_{s, X, p} Y \to Y\times Y, \quad a([\gamma],
  y) = (\widetilde{\gamma_y}(1), y)
\]
where \(([\gamma], y)\in \FGd(X) \times_{s, X, p} Y\), and
\(\widetilde{\gamma_y}\) is the unique lift of~\(\gamma\) in~\(Y\)
starting at~\(y\). In other words,
\( a([\gamma], y) = (\widetilde{\gamma_y}(1),
\widetilde{\gamma_y}(0))\).

Since the action of~\(\FGd(X)\) on~\(Y\) is continuous,~\(a\) is
continuous. As~\(Y\) is path connected,~\(a\) is
surjective. Lemma~\ref{lem:free-act} and
Proposition~\ref{prop:condi-for-free-action} imply that the
kinetics~\(a\) is one-to-one \emph{iff} \(Y\) is the simply connected
covering space of~\(X\). In what follows, we show that~\(a\) is a
covering map, and we shall describe slices of~\(a\) in
Proposition~\ref{prop:kinetics-is-covering}. This technical
observation shall prove useful to describe proper actions
of~\(\FGd(X)\).

Consider the following collection~\(\mathcal{L}\) of open subsets
of~\(\FGd(X) \times_{s, X, p} Y\): an element in~\(\mathcal{L}\) is of
the form \(N([\gamma], p(U), p(V)) \times_{s, X, p} V\) where
\begin{itemize}
\item \([\gamma]\in \FGd(X)\);
\item \(U, V\subseteq Y\) are path connected and relatively
  inessential slices of~\(p\) such that \(\gamma(1)\in p(U)\) and
  \(\gamma(0)\in p(V)\).
\end{itemize}
Since path connected and relatively inessential slices of~\(p\) form a
basis of~\(Y\), \(\mathcal{L}\) is a basis
of~\(\FGd(X) \times_{s, X, p} Y\). Moreover, if \(U\) and \(V\) are nonempty,
then so is~\(N([\gamma], p(U), p(V)) \times_{s, X, p} V\).
  
  \begin{lemma}\label{lem:action-map-local-homeo}
    Let \(U,V\subseteq Y\) be nonempty path connected relatively
    inessential open slices.
    \begin{enumerate}
    \item The map of kinetics~`\(a\)' maps the basic open set
      \(N([\gamma], p(U), p(V)) \times_{s, X, p} V\) bijectively onto
      the basic open set~\(U\times V\) of~\(Y\times Y\).
    \item \(a\) is an open map.
    \item \(a\) is a local homeomorphism. In particular, restriction
      of~\(a\) to the basic open set
      \(N([\gamma], p(U), p(V)) \times_{s, X, p} V\) is a
      homeomorphism onto the basic open set~\(U\times V\).
    \end{enumerate}
  \end{lemma}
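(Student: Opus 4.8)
The plan is to prove the three statements essentially in the order stated, deriving (2) and (3) as easy consequences of (1). The heart of the matter is the bijectivity claim in part~(1), so I would spend most of the effort there.

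For part~(1), fix a basic open set \(N([\gamma], p(U), p(V)) \times_{s, X, p} V\) with \(U, V\) nonempty path connected relatively inessential slices. I would first check that \(a\) does land in \(U \times V\): given \(([\delta], v)\) in the basic set, by definition of the neighbourhood \(N([\gamma], p(U), p(V))\) we can write \([\delta] = [\beta \oblong \gamma \oblong \omega]\) with \(\beta \in \PathS{p(U)}\) ending where \(\gamma\) ends (after suitable endpoint matching) and \(\omega \in \PathS{p(V)}\); since \(v \in V\) and \(p|_V\) is a homeomorphism onto \(p(V)\), and \(p|_U\) is a homeomorphism onto \(p(U)\), the lift \(\widetilde{\delta_v}\) stays inside \(V\) along the \(\omega\)-part and inside \(U\) along the \(\beta\)-part (using that \(V, U\) are slices, so lifting a path in \(p(V)\) resp.\ \(p(U)\) starting in \(V\) resp.\ \(U\) never leaves that slice); hence \(\widetilde{\delta_v}(1) \in U\) and \(\widetilde{\delta_v}(0) = v \in V\), so \(a([\delta], v) = (\widetilde{\delta_v}(1), v) \in U \times V\). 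For surjectivity onto \(U \times V\): given \((u, v) \in U \times V\), pick a path \(\omega\) in \(p(V)\) from \(p(v)\) to \(\gamma(0)\) (possible since \(p(V)\) is path connected) and a path \(\beta\) in \(p(U)\) from \(\gamma(1)\) to \(p(u)\); then \([\beta \oblong \gamma \oblong \omega] \in N([\gamma], p(U), p(V))\) and its unique lift starting at \(v\) ends at \(u\) (again because \(U, V\) are slices, the lifted pieces are forced), so \(a([\beta \oblong \gamma \oblong \omega], v) = (u, v)\). For injectivity: if \(a([\delta_1], v_1) = a([\delta_2], v_2) = (u, v)\), then \(v_1 = v_2 = v\) from the second coordinate, and both \([\delta_1], [\delta_2]\) lie in \(N([\gamma], p(U), p(V))\); writing each as \([\beta_i \oblong \gamma \oblong \omega_i]\), the relative inessentiality of \(U\) and \(V\) forces \([\omega_1] = [\omega_2]\) (both are homotopy classes of paths in the relatively inessential \(p(V)\) with the same endpoints) and similarly \([\beta_1] = [\beta_2]\), whence \([\delta_1] = [\delta_2]\).

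I expect the injectivity argument to be the main obstacle, since it is exactly here that relative inessentiality is doing the work and one must be careful that the decomposition \([\delta] = [\beta \oblong \gamma \oblong \omega]\) is well-defined enough at the level of homotopy classes; the cleanest way is probably to argue that the end coordinate \(u\) together with \(v\) and the fixed class \([\gamma]\) determine \([\delta]\) uniquely because any two candidate \(\beta\)'s (resp.\ \(\omega\)'s) are homotopic rel endpoints inside the relatively inessential slice \(p(U)\) (resp.\ \(p(V)\)). Here I would lean on the characterisation of the UC topology basis from Example~\ref{exa:fund-gpd} and on unique homotopy lifting for \(p\) (available by Theorem~\ref{thm:classify-cover-space}, since \(p\) is a covering map).

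Part~(2): since \(\mathcal{L}\) is a basis of \(\FGd(X) \times_{s, X, p} Y\) and \(U \times V\) ranges over a basis of \(Y \times Y\) as \(U, V\) range over slices, statement~(1) says \(a\) carries each basic open set onto a basic open set; hence \(a\) is open. Part~(3): combining~(1) and~(2), the restriction of \(a\) to each \(N([\gamma], p(U), p(V)) \times_{s, X, p} V\) is a continuous (restriction of the continuous \(a\)) open bijection onto the open set \(U \times V\), hence a homeomorphism; since these basic sets cover the domain, \(a\) is a local homeomorphism. This also records the slices of \(a\) explicitly, which is what the subsequent Proposition~\ref{prop:kinetics-is-covering} will exploit.
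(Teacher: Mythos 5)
Your route is the same as the paper's: the paper disposes of part~(1) with the single assertion that, \(U\) and \(V\) being path connected and relatively inessential slices, the restriction of \(a\) to \(B \defeq N([\gamma], p(U), p(V)) \times_{s,X,p} V\) is a continuous bijection onto \(U\times V\), and then deduces (2) and (3) exactly as you do; your injectivity argument is sound and is essentially Lemma~\ref{lem:fibres-over-V-V} specialised to one basic set. However, there is a genuine gap in the step where you claim that \(\widetilde{\delta_v}\) stays in \(U\) along the \(\beta\)-part, and the same gap recurs in your surjectivity argument. The \(\omega\)-part of the lift does stay in \(V\), but it ends at the point \(v_0 = (p|_V)\inverse(\gamma(0))\), independently of \(v\) and \(\omega\); the lift \(\widetilde{\gamma}_{v_0}\) then ends at some point of \(p\inverse(\gamma(1))\), which lies in \emph{some} slice over \(p(U)\) but not necessarily in \(U\): the hypotheses only require \(\gamma(1)\in p(U)\) and \(\gamma(0)\in p(V)\). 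If \(\widetilde{\gamma}_{v_0}(1)\) lies in a different slice \(U'\) over \(p(U)\), then the \(\beta\)-part lifts into \(U'\), and \(a\) maps \(B\) bijectively onto \(U'\times V\), not onto \(U\times V\). (For instance, let \(p\) be the connected double cover of \(S^1\), \(V\) a slice over a small arc around \(\gamma(0)\), and \(U\) the slice over a small arc around \(\gamma(1)\) \emph{not} containing \(\widetilde{\gamma}_{v_0}(1)\).) So your parenthetical principle ``lifting a path in \(p(U)\) starting in \(U\) never leaves \(U\)'' is correct, but you never verify that the relevant lift starts in \(U\), and in general it does not.

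To be fair, the lemma as printed has the same imprecision and the paper's one-line proof does not address it either; the gap is harmless for what follows. What is actually true---and all that (2), (3) and Proposition~\ref{prop:kinetics-is-covering} require---is that \(a\) maps \(B\) bijectively (hence, by your openness argument, homeomorphically) onto \(U'\times V\), where \(U'\) is the uniquely determined slice over \(p(U)\) containing \(\widetilde{\gamma}_{v_0}(1)\). In the later applications \([\gamma]\) is drawn from \(A^y_z\) with \(z\in V\) the point over \(\gamma(0)\) and \(y\in U\), which is exactly the condition \(U'=U\). So either add that hypothesis to part~(1) (the lift of \(\gamma\) starting at the point of \(V\) over \(\gamma(0)\) ends in \(U\)), or state the image as a product of slices; with that correction your verification of the image, your surjectivity and injectivity arguments, and your deductions of (2) and (3) all go through.
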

  
  \begin{proof}
    (1) Write \(B\defeq N([\gamma], p(U), p(V)) \times_{s, X, p} V
    \). Since~\(U\) and~\(V\) are path connected and relatively
    inessential slices, \(a|_B\colon B\to U\times V\) is a continuous
    bijection.

    \noindent (2): The last argument shows that~\(a\) maps a basic
    open set in~\(\FGd(X)\times_{s,X,p} Y\) to a basic open set
    in~\(Y\times Y\). Therefore, \(a\) is an open mapping.

    \noindent (3): Follows from (1) and (2) above.
  \end{proof}

  Fix two path connected relatively inessential
  slices~\(U,V\subseteq Y\).  We next want to describe
  \(a\inverse(U\times V)\). For that, fix points \(y\in U\) and
  \(z\in V\). Denote the transformation groupoid \(\FGd(X)\ltimes Y\)
  by~\(A\). Consider the set~\(A_z^y\) of arrows in~\(A\) which
  \emph{take} the unit~\(z\) in~\(A\) to~\(y\) for the obvious action
  of~\(A\) on~\(\base[A]\homeo Y\). To be precise,
  \[
    A_z^y \defeq \{([\gamma], y) \in A : \widetilde{\gamma_z}(1) =
    y\}.
  \]
  In other words, \(A_z^y\) consists of arrows in~\(\FGd(X)\) which
  take~\(z\in Y\) to~\(y\) under the action of the fundamental
  groupoid. Therefore, we may also write
  \begin{equation}\label{eq:alt-def-of-A-zy}
    A_z^y = \{[\gamma] \in \FGd(X) : \widetilde{\gamma_z}(1) =
    y\};
  \end{equation}
  this identification is more comfortable to use than the earlier one.
  
  \begin{lemma}
    \label{lem:fibres-over-V-V}
    Let \([\gamma_1],[\gamma_2]\in A_z^y\); define
    \(B_i = N([\gamma_i], p(U), p(V)) \times_{s,X,p} V \) for
    \(i=1,2\). Then
    \[
      B_1\cap B_2 =
      \begin{cases}
        \emptyset &  \text{ if } [\gamma_1]\neq [\gamma_2],\\
        B_1 & \text{ if } [\gamma_1]= [\gamma_2].
      \end{cases}
    \]
  \end{lemma}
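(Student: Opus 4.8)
The plan is to reduce the whole statement to a uniqueness-of-factorisation fact inside~$\FGd(X)$, using that the slices have been chosen so that $p(U)$ and $p(V)$ are relatively inessential open subsets of~$X$ (this is part of the standing setup; it is exactly what makes $N([\gamma_i],p(U),p(V))$ meaningful). The case $[\gamma_1]=[\gamma_2]$ needs no argument, since then $B_1$ and $B_2$ are literally the same set and $B_1\cap B_2=B_1$. So all the content lies in the disjointness assertion, which I would prove by contradiction.

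Assume $[\gamma_1]\neq[\gamma_2]$ and suppose there is a point $([\delta],v)\in B_1\cap B_2$. First I would record what $[\gamma_i]\in A_z^y$ gives concretely (cf.~\eqref{eq:alt-def-of-A-zy}): the lift of~$\gamma_i$ starting at~$z$ is defined and ends at~$y$, so $\gamma_i(0)=p(z)$ and $\gamma_i(1)=p(y)$ for both~$i$; in particular $\gamma_1$ and $\gamma_2$ have the same endpoints. Next I would unravel the membership $([\delta],v)\in B_i=N([\gamma_i],p(U),p(V))\times_{s,X,p}V$: it yields $v\in V$, $\delta(0)=p(v)$, and a factorisation $[\delta]=[\delta_i\oblong\gamma_i\oblong\omega_i]$ in which $\delta_i$ is a path in $p(U)$ from $\gamma_i(1)=p(y)$ to $\delta(1)$, and $\omega_i$ is a path in $p(V)$ from $\delta(0)=p(v)$ to $\gamma_i(0)=p(z)$.

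The key step is then the relatively-inessential hypothesis: $\delta_1$ and $\delta_2$ are paths in $p(U)$ with the same pair of endpoints, hence homotopic rel endpoints in~$X$, so $[\delta_1]=[\delta_2]$ in $\FGd(X)$; likewise $[\omega_1]=[\omega_2]$. Reading the factorisation as a groupoid product, $[\delta]=[\delta_i]\,[\gamma_i]\,[\omega_i]$ (the source/range conditions above make the triple composable), and solving for the middle factor gives $[\gamma_i]=[\delta_i]\inverse\,[\delta]\,[\omega_i]\inverse$; the right-hand side does not depend on~$i$, so $[\gamma_1]=[\gamma_2]$, contradicting our assumption. Therefore $B_1\cap B_2=\emptyset$.

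I do not anticipate a genuine obstacle here: the only things requiring care are the endpoint bookkeeping forced by the right-to-left concatenation convention (which of $\delta_i(0),\delta_i(1)$ is $\gamma_i(1)$, and so on) and being explicit that ``relatively inessential'' is precisely what promotes ``same endpoints'' to ``equal in~$\FGd(X)$''. (A purely topological variant is available from Lemma~\ref{lem:action-map-local-homeo}(3): the homeomorphisms $a|_{B_i}\colon B_i\to U\times V$ have inverse sections that agree on the open set $a(B_1\cap B_2)$, which a limiting argument shows is also closed in the path connected set $U\times V$ and hence equal to it, so $B_1=B_2$; but the computation above is self-contained, and I would present that.)
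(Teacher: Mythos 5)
Your proof is correct and follows essentially the same route as the paper: take a point in $B_1\cap B_2$, unpack the two factorisations from the definition of $N([\gamma_i],p(U),p(V))$, and use relative inessentiality of $p(U)$ and $p(V)$ to cancel the outer factors (the paper phrases this as null-homotopy of the correction loops, you as $[\delta_1]=[\delta_2]$ and $[\omega_1]=[\omega_2]$, which is the same step), concluding $[\gamma_1]=[\gamma_2]$. The endpoint bookkeeping under the right-to-left concatenation convention is handled correctly, so no changes are needed.
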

  \begin{proof}
    Assume \(B_1\cap B_2\neq \emptyset\), and let \([\eta]\) be in the
    intersection. Then
    \( [\eta] = [\delta_2 \oblong \gamma_2 \oblong \delta_1] =
    [\epsilon_2 \oblong \gamma_1 \oblong \epsilon_1] \) for paths
    \(\delta_2,\epsilon_2\) laying in~\(p(U)\) starting at~\(p(y)\),
    and paths \(\delta_1,\epsilon_1\) laying in~\(p(V)\) ending
    at~\(p(z)\). Therefore,
    \[
      [\gamma_2 ] = [\delta_2^- \oblong \epsilon_2] \oblong [\gamma_1]
      \oblong [\epsilon_1 \oblong \delta_1^-]
    \]
    where \(\delta_i^-\) the reverse of~\(\delta_i\) for
    \(i=1,2\). Now, since \(p(U)\) is relatively inessential, the
    \emph{loop}~\(\delta_2^- \oblong \epsilon_2\) at~\(y\) is null
    homotopic. So is~\(\epsilon_1 \oblong \delta_1^-\). Therefore the
    last equation implies that~\([\gamma_1] = [\gamma_2]\). Thus
    \([\gamma_1]\neq [\gamma_2]\) gives \(B_1\cap B_2=\emptyset\). The
    other case is clear.
  \end{proof}

  As a consequence of last lemma, using
  Equation~\eqref{eq:alt-def-of-A-zy}, we can see that
  \begin{equation*}\label{eq:a-inverse-U-V}
    a\inverse(U\times V) = \bigsqcup_{[\gamma] \in A_z^y}
    \left(N([\gamma], p(U), p(V)) \times_{s, X, p}
      V \right).
  \end{equation*}
  Moreover, Lemma~\ref{lem:action-map-local-homeo} shows that
  restriction of the kinetics to
  each~\(N([\gamma], p(U), p(V)) \times_{s, X, p} V\) above is a
  homeomorphism onto~\(U\times V\). Thus, we have prove the following
  results:

  \begin{proposition}\label{prop:kinetics-is-covering}
    Let~\(X\) be path connected, locally path connected and
    semilocally simply connected space. Let \(Y\) be a path connected
    \(\FGd(X)\)\nb-space having \etale\ momentum map \(r_Y\). Then the
    map
    \[
      a\colon \FGd(X)\times_{s,X, r_Y} Y \to Y\times Y
    \]
    of the kinetics of the action is a covering map. In fact, for path
    connected relatively inessential slices~\(U,V\subseteq Y\),
    \[
      a\inverse(U\times V) = \bigsqcup_{[\gamma] \in A_z^y}
      \left(N([\gamma], p(U), p(V)) \times_{s, X, p} V \right)
    \]
    where each \(N([\gamma], p(U), p(V)) \times_{s, X, p} V\) is a
    slice over~\(U\times V\) under~\(a\).
  \end{proposition}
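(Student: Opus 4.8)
\emph{Proof proposal.} The plan is to verify the definition of a covering map for \(a\) directly, exhibiting evenly covered neighbourhoods drawn from a basis of \(Y\times Y\). All the inputs are already available: \(a\) is continuous because the action is, and surjective because \(Y\) is path connected; by Lemma~\ref{lem:action-map-local-homeo} it is a local homeomorphism that carries each basic open set \(N([\gamma], p(U), p(V)) \times_{s,X,p} V\) — for nonempty path connected relatively inessential slices \(U,V\subseteq Y\) with \(\gamma(1)\in p(U)\) and \(\gamma(0)\in p(V)\) — homeomorphically onto \(U\times V\). Since such slices form a basis of \(Y\), the sets \(U\times V\) form a basis of \(Y\times Y\), so it suffices to show that each such \(U\times V\) is evenly covered, with the pieces \(N([\gamma],p(U),p(V))\times_{s,X,p}V\) (for \([\gamma]\in A_z^y\)) as its slices.

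The main step is to fix nonempty path connected relatively inessential slices \(U,V\subseteq Y\) and points \(y\in U\), \(z\in V\), and to establish
\[
  a\inverse(U\times V) = \bigcup_{[\gamma]\in A_z^y}\bigl(N([\gamma], p(U), p(V)) \times_{s,X,p} V\bigr).
\]
The inclusion \(\supseteq\) is immediate: if \([\gamma]\in A_z^y\) then \(\gamma(0)=p(z)\in p(V)\) and \(\gamma(1)=p(y)\in p(U)\), so Lemma~\ref{lem:action-map-local-homeo}(1) applies and the corresponding piece maps onto \(U\times V\), hence lies in \(a\inverse(U\times V)\). For \(\subseteq\), I would take \(([\eta],w)\in a\inverse(U\times V)\), so \(w\in V\) and \(y'\defeq \widetilde{\eta_w}(1)\in U\), and manufacture a suitable \(\gamma\): choose a path \(\alpha\) in \(p(V)\) from \(p(z)\) to \(p(w)=\eta(0)\) and a path \(\beta\) in \(p(U)\) from \(p(y')=\eta(1)\) to \(p(y)\); since \(p\) restricts to homeomorphisms on \(V\) and \(U\), the lift of \(\alpha\) through \(z\) runs inside \(V\) and ends at \(w\), and the lift of \(\beta\) through \(y'\) runs inside \(U\) and ends at \(y\). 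Setting \(\gamma\defeq \beta\oblong\eta\oblong\alpha\) and concatenating the three lifts gives \(\widetilde{\gamma_z}(1)=y\), i.e.\ \([\gamma]\in A_z^y\) by~\eqref{eq:alt-def-of-A-zy}; and \([\eta]=[\beta^-\oblong\gamma\oblong\alpha^-]\), the inserted segments cancelling up to homotopy, exhibits \([\eta]\in N([\gamma],p(U),p(V))\), with \(\beta^-\in\PathS{p(U)}\) and \(\alpha^-\in\PathS{p(V)}\) satisfying the endpoint constraints. Hence \(([\eta],w)\in N([\gamma],p(U),p(V))\times_{s,X,p}V\).

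To finish, I would invoke Lemma~\ref{lem:fibres-over-V-V} to see that distinct classes in \(A_z^y\) give disjoint pieces, so the union above is a disjoint union, and Lemma~\ref{lem:action-map-local-homeo}(3) to see that \(a\) maps each piece homeomorphically onto \(U\times V\). Thus \(U\times V\) is evenly covered with the listed slices; since these sets form a basis of \(Y\times Y\), every point has an evenly covered neighbourhood, and the continuous surjection \(a\) is a covering map with the stated description of \(a\inverse(U\times V)\).

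I expect the only real work to be the inclusion \(\subseteq\) of the displayed identity — producing a representative \(\gamma\) of an element of \(A_z^y\) by pre- and post-composing \(\eta\) with paths chosen inside the two slices, and checking via the unique path lifting property that \(\gamma\) carries \(z\) to \(y\). Disjointness is entirely delegated to Lemma~\ref{lem:fibres-over-V-V} (where relative inessentiality of the slices is what makes the argument run), the ``homeomorphism onto \(U\times V\)'' claim to Lemma~\ref{lem:action-map-local-homeo}, and the rest is bookkeeping with the fact that path connected relatively inessential slices form a basis.
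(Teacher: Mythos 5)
Your proposal is correct and follows essentially the same route as the paper: the paper also derives the decomposition of \(a\inverse(U\times V)\) from Lemma~\ref{lem:action-map-local-homeo} (each piece is a homeomorphic slice over \(U\times V\)) and Lemma~\ref{lem:fibres-over-V-V} (disjointness), with the slices \(U\times V\) forming a basis of \(Y\times Y\). The only difference is that you spell out the inclusion \(\subseteq\) (manufacturing \([\gamma]\in A_z^y\) by pre- and post-composing \(\eta\) with paths inside the slices), which the paper leaves as an unstated ``we can see that'' step; your filled-in argument is sound.
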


  Reader may compare Proposition~\ref{prop:kinetics-is-covering}
  and~\cite[Observation 2.10]{Holkar-Hossain2023Top-FGd-I}; in the
  latter one, one should consider the action of~\(\FGd(X)\)
  on~\(X\). Last proposition
  generalises~\cite[Proposition~2.37]{Reinhart1983Folliations-book}.

  Using Proposition~\ref{prop:kinetics-is-covering}, we can describe
  \emph{small} compact sets in the transformation groupoid~\(A\) as
  follows. Assume~\(Y\) is locally compact, and consider the same open
  sets~\(U\) and~\(V\) as in last proposition.  Let
  \(U',V' \subseteq Y\) be path connected relatively inessential
  slices whose closures are compact and \(\overline{U'} \subseteq U\)
  and~\(\overline{V'} \subseteq V\).  Then for \([\gamma] \in A_z^y\),
  the closure
  \(\overline{N([\gamma], p(U'), p(V')) \times_{s,X,p} V'}\) is
  homeomorphic to the compact
  subset~\(\overline{U'}\times \overline{V'} = \overline{U' \times
    V'}\subseteq U\times V\). Then the proposition implies that
  \begin{equation}
    \label{eq:inverse-im-of-small-cpt}
    a\inverse(\overline{U'\times V'}) = \bigsqcup_{[\gamma] \in A_z^y}
    \left( \overline {N([\gamma], p(U'), p(V')) \times_{s, X, p} V'} \right)  .
  \end{equation}

  As a closing remark, we note that the collection~\(\mathcal{L}'\)
  consisting of path connected relatively, inessential open sets~\(U'\)
  such that \(U'\) closure is compact and the closure is contained in
  a path connected, relatively inessential slice forms a basis for the
  topology on~\(Y\) when~\(Y\) is locally compact, Hausdorff, path
  connected, locally path connected and semilocally simply
  connected. Moreover, \(\mathcal{L}'\) is a refinement
  of~\(\mathcal{L}\).

  \subsection{Proper actions of the fundamental groupoid}

  In this section, we study proper actions of the locally compact
  groupoid~\(\FGd(X)\). We prove
  Theorem~\ref{thm:suff-cond-proper-action} which characterises proper
  actions using isotropy at a point. Indeed, we focus on the
  \(\FGd(X)\)\nb-spaces in the category~\(\CatEAct_X\).

  Suppose a path connected, locally path connected and semilocally
  simply connected space~\(X\) is given. Let \(p\colon Y\to X\) be a
  path connected covering space. If~\(X\) is locally compact and
  Hausdorff, then the transformation
  groupoid~\(A\defeq \FGd(X) \ltimes Y\) is locally compact
  Hausdorff. This can be seen as follows: \cite[Section
  3]{Holkar-Hossain2023Top-FGd-I} implies that the simply connected
  covering space~\(\tilde{X}\) is Hausdorff and locally compact. In
  this case, the deck transformation action on~\(\tilde{X}\) is
  proper, see~\cite[Chapter~21, \S Covering
  manifold]{Lee2012Intro-to-smooth-manifolds-book}. As a consequence,
  \(Y\)---which is quotient of~\(X\) by a subgroup of the deck
  transformation---is also locally compact and Hausdorff. Next
  \(\FGd(X)\) is locally compact and Hausdorff if~\(X\) is
  so. Therefore, the transformation groupoid \(\FGd(X)\ltimes Y\) is
  locally compact and Hausdorff.

  \begin{theorem}
    \label{thm:suff-cond-proper-action}
    Let \(X\) be a locally compact, Hausdorff, path connected, locally
    path connected and semilocally simply connected space.  Suppose
    \(Y\) is a path connected \(\FGd(X)\)\nb-space with momentum map
    \(p\colon Y\to X\) a surjective local homeomorphism. Then
    following are equivalent:
    \begin{enumerate}
    \item The action of~\(\FGd(X)\) on~\(Y\) is proper.
    \item The fundamental group of~\(Y\) is finite.
    \item \(p_*(\FGp(Y))\) is a finite subgroup of~\(\FGp(X)\).
    \item The restricted action of the fundamental group~\(\FGp(X)\)
      on~\(Y\) is proper.
    \end{enumerate}
  \end{theorem}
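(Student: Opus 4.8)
The plan is to make the kinetics map the central object. Write $A \defeq \FGd(X)\ltimes Y$ and let $a\colon \FGd(X)\times_{s,X,p} Y \to Y\times Y$ be the kinetics of the action. The first step is to count the fibres of $a$. For $(y,z)\in Y\times Y$ one has $a^{-1}(y,z) = \{([\gamma],z): \widetilde{\gamma_z}(1)=y\}$, which Proposition~\ref{prop:kinetics-is-covering} puts in bijection with the set $A_z^y$; this set is nonempty since $Y$ is path connected (lift a path in $Y$ from $z$ to $y$ and project), so Observation~\ref{obs:cardinality-of-iso-and-edges}, applied in the groupoid $A$ to an arrow from $z$ to $y$, gives a bijection $A_z^z \to A_z^y$. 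As $A_z^z$ is the stabiliser of $z$, Proposition~\ref{prop:nature-of-act-of-FG}(\ref{cond:stab}) yields $|a^{-1}(y,z)| = |\FGp(Y,z)| = |\FGp(Y)|$, a value independent of $(y,z)$. I would also note at this point that the hypotheses on $X$ force $Y$, and hence $Y\times Y$, to be locally compact Hausdorff (as discussed just before the theorem), so the covering space $\FGd(X)\times_{s,X,p}Y$ of $Y\times Y$ is locally compact Hausdorff as well.

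Next I would prove $(1)\Leftrightarrow(2)$. By the Lemma and definition characterising proper actions in terms of the kinetics, $(1)$ holds exactly when $a$ is a proper map. Since $a$ is a covering map onto the locally compact Hausdorff space $Y\times Y$, I claim $a$ is proper precisely when its fibres are finite. If some fibre is infinite, it is an infinite discrete closed subset lying over a single (compact) point, so $a$ is not proper. If the fibres are finite, cover a compact $K\subseteq Y\times Y$ by finitely many open sets $U_i$ with $\overline{U_i}$ compact and contained in an evenly covered open; then each $a^{-1}(\overline{U_i})$ is a finite disjoint union of compact slices, so $a^{-1}(K)$ is a closed subset of the compact set $\bigcup_i a^{-1}(\overline{U_i})$ and hence compact. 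Combined with the fibre count of the first step, $(1)$ holds if and only if $|\FGp(Y)|<\infty$, i.e.\ $(2)$. (One direction, $(1)\Rightarrow(2)$, is also immediate from Observation~\ref{obs:isotropy-of-proper-action} together with the discreteness of $\FGp(Y,y)$, a compact discrete group being finite.)

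The equivalence $(2)\Leftrightarrow(3)$ is immediate: $p$ is a covering map by Theorem~\ref{thm:classify-cover-space}, so $p_*\colon\FGp(Y,y)\to\FGp(X,p(y))$ is injective and $p_*(\FGp(Y))\cong\FGp(Y)$, and finiteness transfers. For $(2)\Leftrightarrow(4)$, the point is that the restricted action $\alpha_x$ of the \emph{discrete} group $\FGp(X,x)=\FGd(X)_x^x$ has fibre product $\FGp(X,x)\times_{s,X,p}Y=\FGp(X,x)\times p^{-1}(x)$ with $p^{-1}(x)$ discrete; so $\alpha_x$ is an action of a discrete group on a discrete set, which is proper if and only if every point stabiliser is finite (unwind the criterion ``$\{g:gT\cap S\neq\emptyset\}$ compact'' for finite $T,S$, such a set being a finite union of cosets of stabilisers). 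By Remark~\ref{rem:iso-for-FGp-action} and Proposition~\ref{prop:nature-of-act-of-FG}(\ref{cond:stab}), the stabiliser of $y\in p^{-1}(x)$ under $\alpha_x$ is $p_*(\FGp(Y,y))\cong\FGp(Y)$, so $\alpha_x$ is proper iff $\FGp(Y)$ is finite.

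The step I expect to be the main obstacle is the middle one — proving cleanly that a covering map onto a locally compact Hausdorff base is proper exactly when its fibres are finite, and keeping straight the local-compactness/Hausdorffness bookkeeping for $Y$, $Y\times Y$, $\FGd(X)\times_{s,X,p}Y$, and the discrete groups appearing in $(4)$, since the notion of a proper action is only defined under those standing hypotheses, which is where the locally-compact-Hausdorff assumption on $X$ earns its keep.
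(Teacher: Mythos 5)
Your proof is correct, and its engine is the same as the paper's: Proposition~\ref{prop:kinetics-is-covering} (the kinetics \(a\) is a covering map whose slices over \(U\times V\) are the sets \(N([\gamma],p(U),p(V))\times_{s,X,p}V\)), the fibre identification \(a\inverse(y,z)=A_z^y\) with \(|A_z^y|=|A_z^z|=|\FGp(Y)|\) via Observation~\ref{obs:cardinality-of-iso-and-edges} and Proposition~\ref{prop:nature-of-act-of-FG}(\ref{cond:stab}), and a finite cover of a compact \(K\subseteq Y\times Y\) by relatively compact sets sitting inside evenly covered basic opens. The difference is in the packaging of the logic. The paper proves (2)\(\iff\)(3), gets (1)\(\implies\)(2) from compactness of isotropy, does the long implication (3)\(\implies\)(1) in three steps (singleton, small compact set as in Equation~\eqref{eq:inverse-im-of-small-cpt}, general compact set), and handles (4) by the chain (1)\(\implies\)(4) (restriction of a proper action to the closed subgroup \(\FGp(X,x)\)) followed by (4)\(\implies\)(2). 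You instead isolate a clean two-way statement---a covering map onto a locally compact Hausdorff base is proper iff its fibres are finite---which, combined with the constant fibre count, yields (1)\(\iff\)(2) in one stroke; and your observation that \(a\inverse(K)\) is a \emph{closed} subset of the compact union \(\bigcup_i a\inverse(\overline{U_i})\) makes explicit a point the paper leaves tacit. For item (4) you prove (2)\(\iff\)(4) directly, reading the restricted action as a discrete group \(\FGp(X,x)\) acting on the discrete fibre \(p\inverse(x)\) and characterising properness there by finiteness of stabilisers (the relevant sets \(\{g: gT\cap S\neq\emptyset\}\) being finite unions of stabiliser cosets), using Remark~\ref{rem:iso-for-FGp-action}; this buys a self-contained treatment of (4) that does not pass through (1), whereas the paper gets (1)\(\implies\)(4) from the general fact about restricting proper actions to closed subgroups. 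Both routes are complete; yours additionally leaves behind the reusable finite-fibre criterion for properness of covering maps.
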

  \begin{proof}
  
    (2)\(\iff\)(3): Theorem~\ref{thm:classify-cover-space} implies
    that \(Y\) is a covering space of~\(X\) with~\(p\) are the
    covering map. Therefore, (2) and (3) are clearly equivalent as the
    group homomorphism \(p_*\colon \FGp(Y) \to\FGp(X)\) is injective.

    \noindent(1)\(\implies\)(2):
    Observation~\ref{obs:isotropy-of-proper-action} says that for
    proper action the isotropy is a compact
    set. Proposition~\ref{prop:nature-of-act-of-FG}(2) says that the
    isotropy in current case is the fundamental group~\(\FGp(Y)\)
    which is compact \emph{iff} it is finite.

    \noindent (3)\(\implies\)(1): This is the longest part of the
    proof and is be done in the end. Module this proof, the first
    three are equivalent.

    \noindent (1)\(\implies\)(4): For
    any~\(x\in X\),\(\FGp(X,x)\subseteq \FGd(X)\) is a closed
    subgroup. Therefore, if the action of~\(\FGd(X)\) is proper, the
    restriction of the action to~\(\FGp(X,x)\) is also proper.

    \noindent (4)\(\implies\)(2): Remark~\ref{rem:iso-for-FGp-action}
    identifies the isotropy of the restricted action~\(\FGp(X)\)
    with~\(\FGp(Y)\iso p_*(\FGp(Y))\). Therefore, the restricted
    action is proper implies the isotropy is finite.

    Finally, we prove only unjustified claim (3)\(\implies\)(1) which
    shall complete the proof. The claim is proved in three steps:
    \begin{enumerate}
    \item Firstly, when \(K\subseteq Y\times Y\) is singleton.
    \item Then, when \(K\) is a small compact set as in
      Equation~\eqref{eq:inverse-im-of-small-cpt}.
    \item Finally, for a general compact set~\(K\).
    \end{enumerate}
  
    Before we start, let \(A\) denote the transformation
    groupoid~\(\FGd(X)\ltimes Y\). And note that the isotropy
    at~\(z\in Y\) for the action of~\(\FGd(X)\) is the stabiliser
    subgroup~\(A_z^z\subseteq A\) which is assumed to be
    finite. Recall Equation~\eqref{eq:alt-def-of-A-zy}; and write an
    enumeration of \(A_z^z = \{[\gamma_1], \dots, [\gamma_n]\}\) for
    some \(n\in \N\).

    Then, in the first case, when \(K=\{(y,z)\}\),
    \(a\inverse(\{(y,z)\}) = A_z^y\).
    Observation~\ref{obs:cardinality-of-iso-and-edges} implies that
    \(A_z^y\) is in bijection with~\(A_z^z\) hence it is compact.

    Now consider the basis~\(\mathcal{L}'\) for the topology of~\(X\)
    discussed just after
    Equation~\eqref{eq:inverse-im-of-small-cpt}. This basis consisting
    of relatively compact open sets~\(U'\) whose closures are
    contained in a path connected relatively inessential slice
    over~\(p\colon Y\to X\). Then the sets of the form \(U'\times V'\)
    where \(U',V'\in \mathcal{L}'\) form a basis of relatively compact
    sets for the topology of~\(Y\times Y\). For
    \(U',V'\in \mathcal{L}'\),
    Equation~\eqref{eq:inverse-im-of-small-cpt} implies that
    \[
      a\inverse(\overline{U'\times V'}) = \bigsqcup_{i=1}^n
      \left(\overline {N([\gamma_i], p(U'), p(V')) \times_{s, X, p}
          V'} \right)
    \]
    where each
    \(\overline {N([\gamma_i], p(U'), p(V')) \times_{s, X, p} V'}\) is
    a homeomorphic copy of~\(\overline{U'\times V'}\) (
    see~Proposition~\ref{prop:kinetics-is-covering}). Thus
    \(a\inverse(\overline{U'\times V'})\) is a compact set being union
    of finitely many compact sets.

    Finally, let \(K\subseteq Y\times Y\) be any compact
    set. Cover~\(K\) by finitely many relatively compact
    sets~\(U_1\times V_1, \dots, U_m\times V_m\) where
    \(U_j,V_j\in \mathcal{L}'\). Then
    \[
      a\inverse(K)\subseteq \bigcup_{i=1}^m
      a\inverse(\overline{U_j\times V_j})
    \]
    where each \(a\inverse(\overline{U_j\times V_j})\) is compact by
    last argument. Thus \(a\inverse(K)\subseteq A\) is compact.
  \end{proof}

  Following are some immediate consequences of
  Theorem~\ref{thm:suff-cond-proper-action}(2).

  \begin{corollaries}
    \label{cor:cons-prop-act-theorem}
    Let \(X\) be a path connected, locally path connected, semilocally
    simply connected, Hausdorff and locally compact space. Then
    following hold:
    \begin{enumerate}
    \item The action of~\(\FGd(X)\) on the simply connected covering
      space is proper.
    \item The action of~\(\FGd(X)\) on~\(X\) is proper \emph{iff} the
      fundamental group of~\(X\) is finite.
    \item For each finite subgroup~\(G\) of~\(\FGp(X)\), the
      associated covering space~\(X_G\to X\) is a proper covering
      space.  Moreover, these are the only proper
      \(\FGd(X)\)\nb-spaces in the action
      category~\(\CatEAct_{\FGd(X)}\).
    \end{enumerate}
  \end{corollaries}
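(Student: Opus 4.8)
The plan is to deduce all three items directly from Theorem~\ref{thm:suff-cond-proper-action}, using Theorem~\ref{thm:classify-cover-space} and the isomorphism \(\CatEAct_{\FGd(X)}\iso\CatCovSp_X\) of Theorem~\ref{thm:iso-act-and-cov-categiries} to pass freely between path connected \(\FGd(X)\)\nb-spaces having \etale\ momentum maps and path connected covering spaces of~\(X\), together with the classical Galois correspondence for covering spaces. The standing hypotheses on~\(X\) guarantee, as recorded just before Theorem~\ref{thm:suff-cond-proper-action}, that the transformation groupoids in sight are locally compact Hausdorff, so that properness is meaningful throughout.

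For~(1): the simply connected covering space~\(\tilde X\) has trivial, hence finite, fundamental group, so the implication (2)\(\implies\)(1) of Theorem~\ref{thm:suff-cond-proper-action} (with \(Y=\tilde X\)) gives that \(\FGd(X)\) acts properly on~\(\tilde X\). For~(2): view~\(X\) as a \(\FGd(X)\)\nb-space via the canonical action on its unit space \(\base[\FGd(X)]\homeo X\); here the momentum map is \(\mathrm{id}_X\), a surjective local homeomorphism, so Theorem~\ref{thm:suff-cond-proper-action} applies with \(Y=X\), and its equivalence (1)\(\iff\)(2) reads precisely that the \(\FGd(X)\)\nb-action on~\(X\) is proper iff \(\FGp(X)\) is finite.

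For~(3): given a finite subgroup \(G\subseteq\FGp(X)\), the covering theory of the path connected, locally path connected, semilocally simply connected space~\(X\) yields a path connected covering \(p_G\colon X_G\to X\) with \((p_G)_*(\FGp(X_G))=G\) for a suitable choice of basepoint; since \(G\) is finite, (3)\(\implies\)(1) of Theorem~\ref{thm:suff-cond-proper-action} applied to \(Y=X_G\) shows \(\FGd(X)\) acts properly on~\(X_G\), i.e.\ \(X_G\) is a proper covering space. For the converse half, let \(Y\) be any object of~\(\CatEAct_{\FGd(X)}\) on which \(\FGd(X)\) acts properly; by Theorem~\ref{thm:classify-cover-space}, \(r_Y\colon Y\to X\) is a path connected covering map, corresponding under the Galois correspondence to the subgroup \(H\defeq (r_Y)_*(\FGp(Y))\subseteq\FGp(X)\). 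The implication (1)\(\implies\)(3) of Theorem~\ref{thm:suff-cond-proper-action} forces \(H\) to be finite, whence \(Y\) is equivariantly homeomorphic---equivalently, isomorphic in~\(\CatCovSp_X\)---to~\(X_H\) for the finite subgroup~\(H\). Thus the proper objects of~\(\CatEAct_{\FGd(X)}\) are exactly the~\(X_H\) with \(H\) a finite subgroup of~\(\FGp(X)\).

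I do not expect a genuine obstacle here: once Theorem~\ref{thm:suff-cond-proper-action} is in hand, each item is a short bookkeeping argument. The only point requiring mild care is that the Galois correspondence matches connected covers with subgroups only up to conjugacy; but finiteness of a subgroup is invariant under conjugation, so neither the construction of~\(X_G\) in~(3) nor the resulting classification is affected.
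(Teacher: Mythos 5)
Your proposal is correct and matches the paper's intent: the paper records these corollaries as immediate consequences of Theorem~\ref{thm:suff-cond-proper-action} without further argument, and your deductions (applying the theorem to \(Y=\tilde X\), to \(Y=X\) with the identity momentum map, and to \(Y=X_G\) together with the Galois correspondence and Theorem~\ref{thm:classify-cover-space} for the converse in~(3)) are exactly the intended bookkeeping, including the harmless conjugacy caveat.
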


\medskip

\paragraph{\itshape Acknowledgement:} We are grateful to Prahlad
Vaidyanathan, Angshuman Bhattacharya and Jean Renault for helpful
discussions. We thank Suliman Albandik for his patience and beneficial
discussions. This work was supported by SERB's SRG/2020/001823 grant
of the first author and CSIR's 09/1020(0159)/2019-EMR-I grant of the
second author; we thank the funding institutions.

\begin{bibdiv}
\begin{biblist}

\bib{Baum-Connes-Higson1993Classyfying-spaces-for-prop-act-and-K-theory}{incollection}{
      author={Baum, Paul},
      author={Connes, Alain},
      author={Higson, Nigel},
       title={Classifying space for proper actions and {$K$}-theory of group
  {$C^\ast$}-algebras},
        date={1994},
   booktitle={{$C^\ast$}-algebras: 1943--1993 ({S}an {A}ntonio, {TX}, 1993)},
      series={Contemp. Math.},
      volume={167},
   publisher={Amer. Math. Soc., Providence, RI},
       pages={240\ndash 291},
         url={http://dx.doi.org/10.1090/conm/167/1292018},
      review={\MR{1292018}},
}

\bib{Brown-Danesh-1975-Top-FG-1}{article}{
      author={Brown, R.},
      author={Danesh-Naruie, G.},
       title={The fundamental groupoid as a topological groupoid},
        date={1974/75},
        ISSN={0013-0915},
     journal={Proc. Edinburgh Math. Soc. (2)},
      volume={19},
       pages={237\ndash 244},
         url={http://dx.doi.org/10.1017/S0013091500015509},
      review={\MR{0413096}},
}

\bib{Brown-Danesh-Naruie-1976-Top-FG-2}{article}{
      author={Brown, R.},
      author={Danesh-Naruie, G.},
      author={Hardy, J. P.~L.},
       title={Topological groupoids. {II}. {C}overing morphisms and
  {$G$}-spaces},
        date={1976},
        ISSN={0025-584X},
     journal={Math. Nachr.},
      volume={74},
       pages={143\ndash 156},
         url={http://dx.doi.org/10.1002/mana.3210740110},
      review={\MR{0442937}},
}

\bib{Brown2006Topology-book}{book}{
      author={Brown, Ronald},
       title={Topology and groupoids},
   publisher={BookSurge, LLC, Charleston, SC},
        date={2006},
        ISBN={1-4196-2722-8},
        note={Third edition of {{\i}t Elements of modern topology}
  [McGraw-Hill, New York, 1968; MR0227979], With 1 CD-ROM (Windows, Macintosh
  and UNIX)},
      review={\MR{2273730}},
}

\bib{DoCarmo1976Diff-Geo-Curves-Surface}{book}{
      author={do~Carmo, Manfredo~P.},
       title={Differential geometry of curves and surfaces},
   publisher={Prentice-Hall, Inc., Englewood Cliffs, N.J.},
        date={1976},
        note={Translated from the Portuguese},
      review={\MR{0394451}},
}

\bib{Emerson-Meyer2010Dualities-in-Equi-Kasparov-theory}{article}{
      author={Emerson, Heath},
      author={Meyer, Ralf},
       title={Dualities in equivariant {K}asparov theory},
        date={2010},
     journal={New York J. Math.},
      volume={16},
       pages={245\ndash 313},
         url={http://nyjm.albany.edu:8000/j/2010/16_245.html},
      review={\MR{2740579}},
}

\bib{Hatcher2002Alg-Top-Book}{book}{
      author={Hatcher, Allen},
       title={Algebraic topology},
   publisher={Cambridge University Press, Cambridge},
        date={2002},
        ISBN={0-521-79160-X; 0-521-79540-0},
      review={\MR{1867354}},
}

\bib{Holkar2017Construction-of-Corr}{article}{
      author={Holkar, Rohit~Dilip},
       title={Topological construction of {$C^*$}-correspondences for groupoid
  {$C^*$}-algebras},
        date={2017},
     journal={Journal of {O}perator {T}heory},
      volume={77:1},
      number={23-24},
       pages={217\ndash 241},
}

\bib{Holkar-Hossain2023Top-FGd-I}{article}{
      author={Holkar, Rohit~Dilip},
      author={Hossain, Md~Amir},
       title={Topological fundamental groupoid.{I}.},
        date={2023},
        note={Preprint- arXiv:2302.01583v1},
}

\bib{Lee2012Intro-to-smooth-manifolds-book}{book}{
      author={Lee, John~M.},
       title={Introduction to smooth manifolds},
     edition={Second},
      series={Graduate Texts in Mathematics},
   publisher={Springer, New York},
        date={2013},
      volume={218},
        ISBN={978-1-4419-9981-8},
      review={\MR{2954043}},
}

\bib{Muhly-Tomforde-2005-Topological-quivers}{article}{
      author={Muhly, Paul~S.},
      author={Tomforde, Mark},
       title={Topological quivers},
        date={2005},
        ISSN={0129-167X},
     journal={Internat. J. Math.},
      volume={16},
      number={7},
       pages={693\ndash 755},
         url={http://dx.doi.org/10.1142/S0129167X05003077},
      review={\MR{2158956 (2006i:46099)}},
}

\bib{Munkress1975Topology-book}{book}{
      author={Munkres, James~R.},
       title={Topology: a first course},
   publisher={Prentice-Hall, Inc., Englewood Cliffs, N.J.},
        date={1975},
      review={\MR{0464128 (57 \#4063)}},
}

\bib{Reinhart1983Folliations-book}{book}{
      author={Reinhart, Bruce~L.},
       title={Differential geometry of foliations},
      series={Ergebnisse der Mathematik und ihrer Grenzgebiete [Results in
  Mathematics and Related Areas]},
   publisher={Springer-Verlag, Berlin},
        date={1983},
      volume={99},
        ISBN={3-540-12269-9},
         url={https://doi.org/10.1007/978-3-642-69015-0},
        note={The fundamental integrability problem},
      review={\MR{705126}},
}

\bib{Renault1980Gpd-Cst-Alg}{book}{
      author={Renault, Jean},
       title={A groupoid approach to {$C^{\ast} $}-algebras},
      series={Lecture Notes in Mathematics},
   publisher={Springer, Berlin},
        date={1980},
      volume={793},
        ISBN={3-540-09977-8},
      review={\MR{584266 (82h:46075)}},
}

\bib{Spanier1966Alg-Top-Book}{book}{
      author={Spanier, Edwin~H.},
       title={Algebraic topology},
   publisher={McGraw-Hill Book Co., New York-Toronto, Ont.-London},
        date={1966},
      review={\MR{0210112}},
}

\bib{Tu2004NonHausdorff-gpd-proper-actions-and-K}{article}{
      author={Tu, Jean-Louis},
       title={Non-{H}ausdorff groupoids, proper actions and {$K$}-theory},
        date={2004},
        ISSN={1431-0635},
     journal={Doc. Math.},
      volume={9},
       pages={565\ndash 597 (electronic)},
      review={\MR{2117427 (2005h:22004)}},
}

\bib{Williams2019A-Toolkit-Gpd-algebra}{book}{
      author={Williams, Dana~P.},
       title={A tool kit for groupoid {$C^*$}-algebras},
      series={Mathematical Surveys and Monographs},
   publisher={American Mathematical Society, Providence, RI},
        date={2019},
      volume={241},
        ISBN={978-1-4704-5133-2},
         url={https://doi.org/10.1016/j.physletb.2019.06.021},
      review={\MR{3969970}},
}

\end{biblist}
\end{bibdiv}

\end{document}